\newcommand{\lag}{\left \langle}
\newcommand{\rog}{\right \rangle}
\newcommand{\df}{\mathrel{\mathop:}=}
\newtheorem{theorem}{Theorem}
\newtheorem{remark}{Remark}
\newtheorem{definition}{Definition}
\newtheorem{proposition}{Proposition}
\newtheorem{lemma}{Lemma} 
\newtheorem{corollary}{Corollary}
\crefname{problem}{Problem}{Problems}
\Crefname{problem}{Problem}{Problems}
\begin{document}
\title{A note on drastic product logic}
\author{Stefano Aguzzoli, Matteo Bianchi}
\affil{Department of Computer Science, Università degli Studi di Milano, Via Comelico 39/41, 20135, Milano, Italy \\\url{aguzzoli@di.unimi.it}, \url{matteo.bianchi@unimi.it}}
\author{Diego Valota}
\affil{Dipartimento di Scienze Teoriche e Applicate, Università degli Studi dell'Insubria, Via Mazzini 5, 21100, Varese, Italy \\\url{diego.valota@uninsubria.it}}
\date{}
\maketitle
\begin{abstract}
The drastic product $*_D$ is known to be the smallest $t$-norm, since $x *_D y = 0$ whenever $x, y < 1$. This $t$-norm is not left-continuous, and hence it
does not admit a residuum. So, there are no drastic product $t$-norm based many-valued logics, in the sense of \cite{eg}.
However, if we renounce standard completeness, we can study the logic whose semantics is provided by those MTL chains whose monoidal operation is the drastic product.
This logic is called ${\rm S}_{3}{\rm MTL}$ in \cite{nog}.
In this note we justify the study of this logic, which we rechristen DP (for drastic product), by means of some interesting properties
relating DP and its algebraic semantics to a weakened law of excluded middle, to the $\Delta$ projection operator and to discriminator varieties.
We shall show that the category of finite DP-algebras is dually equivalent to a category whose objects are multisets of finite chains.
This duality allows us to classify all axiomatic extensions of DP, and to compute the free finitely generated DP-algebras.
\end{abstract}
\section{Introduction and motivations}

The {\em drastic product} $t$-norm $*_D \colon [0,1]^2 \to [0,1]$ is defined as follows: $x*_D y=0$ if $x,y<1$,  $x*_D y=\min\{x,y\}$ otherwise
(see \Cref{fig:1}).
It is clear from the definition that $*_D$ is the smallest $t$-norm, in the sense that for any $t$-norm $*$ and for each $x,y \in [0,1]$
it holds that $x *_D y \leq x * y$. For this reason it is considered one of the fundamental $t$-norms (see, {\em e.g.} \cite{kmp}).
This notwithstanding, there is no drastic product $t$-norm-based logic, in the sense of \cite{eg}, since $*_D$ is not left-continuous,
and hence it has no associated residuum.

In \cite{ssac} Schweizer and Sklar introduce a class of $t$-norms which arise as modifications of the drastic product $t$-norm in such a way
to render them border continuous. In this paper the authors explicitly state
``{\em The result is a $t$-norm which coincides with [the drastic product] over most of the unit square}''.
In \cite{jen}, Jenei introduced left-continuous versions of the above mentioned $t$-norms, which he called {\em revised drastic product} $t$-norms,
as an example of an ordinal sum of triangular subnorms, namely the
ordinal sum of the subnorm which is constantly $0$ with the $t$-norm $\min\{x,y\}$.  The logic RDP based on these $t$-norms has been studied by Wang in \cite{rdp},
where,
by way of motivation, the author recalls the argument of \cite{ssac} about RDP $t$-norms as good approximators of the drastic product.
As RDP is a prominent extension of the logic of weak nilpotent minimum WNM, in \cite{bval} Bova and Valota introduce a categorical duality
for finite RDP-algebras, as a step towards a duality for the case of WNM-algebras.

As it has already been pointed out for  \cite{ssac} and \cite{rdp},
one justification held for the study of RDP is that revised drastic product $t$-norms make good approximations of $*_D$, in the sense that the graph of such
a $t$-norm can be chosen to coincide with $*_D$ up to a subset of $[0,1]^2$ of euclidean measure as small as desired.

A simple observation will show that RDP $t$-norms are as good approximators of $*_D$ as $t$-norms isomorphic (as ordered commutative semigroups) with
{\L}ukasiewicz $t$-norm. Consider, for instance, the parameterised family of $t$-norms introduced by the same Schweizer and Sklar in \cite{ssac},
defined as follows (here we consider only positive real values for the parameter $\lambda$): $x *_{SS}^\lambda y := \max\{0,x^\lambda + y^\lambda -1\}^{1/\lambda}$.
These $t$-norms, being continuous and nilpotent, are all isomorphic to {\L}ukasiewicz $t$-norm, which is obtained by choosing $\lambda = 1$.
It is easy to verify that, for each $c \in (0,1)$, the unique RDP $t$-norm $*_c$ having $c$ as negation fixpoint (that is, $\sim c = c$), has its zeroset
$\{(x,y) \in [0,1]^2 \mid x *_c y = 0\}$ properly included in the zeroset of $*_{SS}^\lambda$ for each $\lambda \geq \log_{1/c} 2$. See Fig. \ref{fig:1} for an example.
\begin{figure}\label{fig:1}
\vspace{-1em}
\begin{center}
\includegraphics[width=3.5cm]{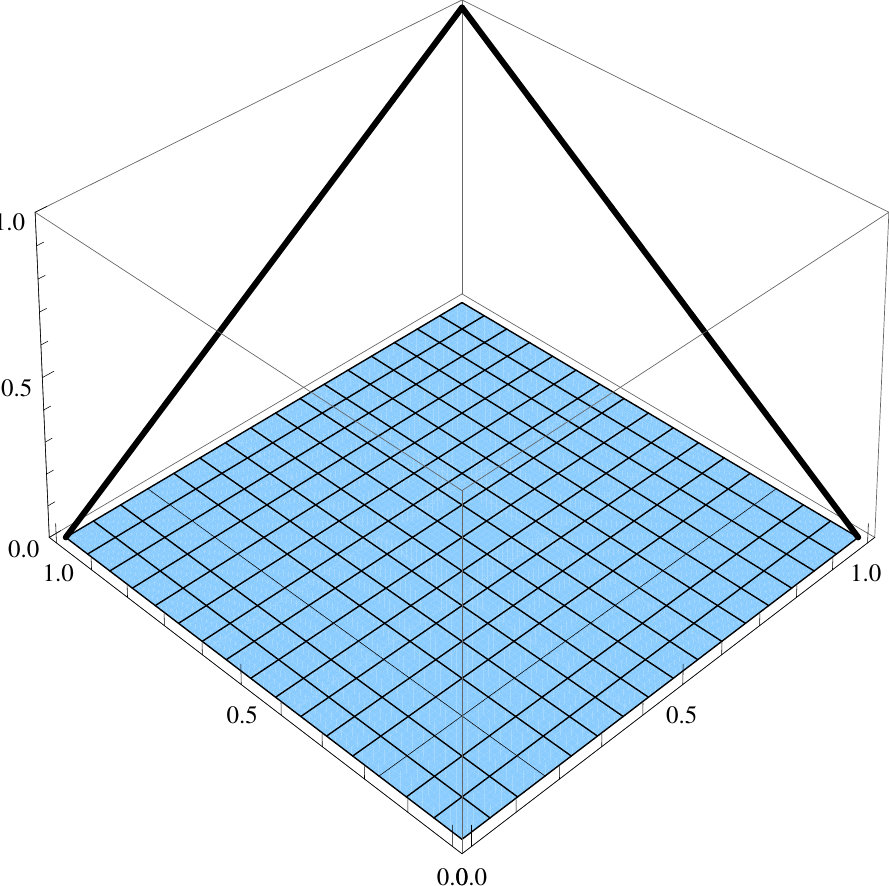}
\qquad
\includegraphics[width=3.5cm]{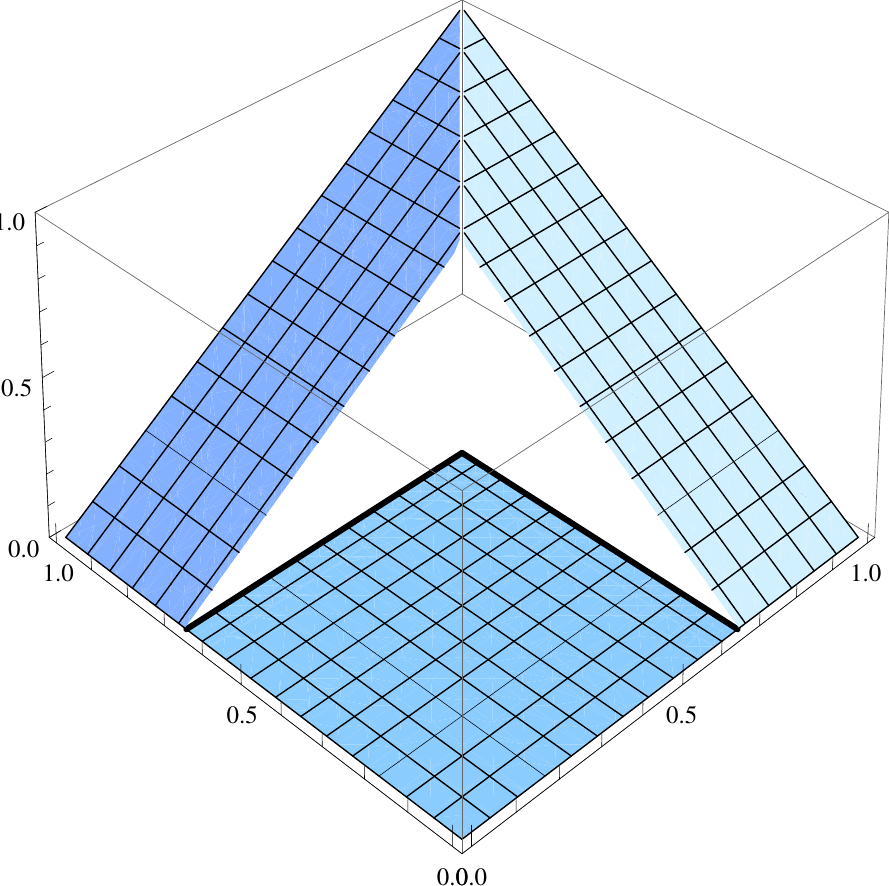}
\qquad
\includegraphics[width=3.5cm]{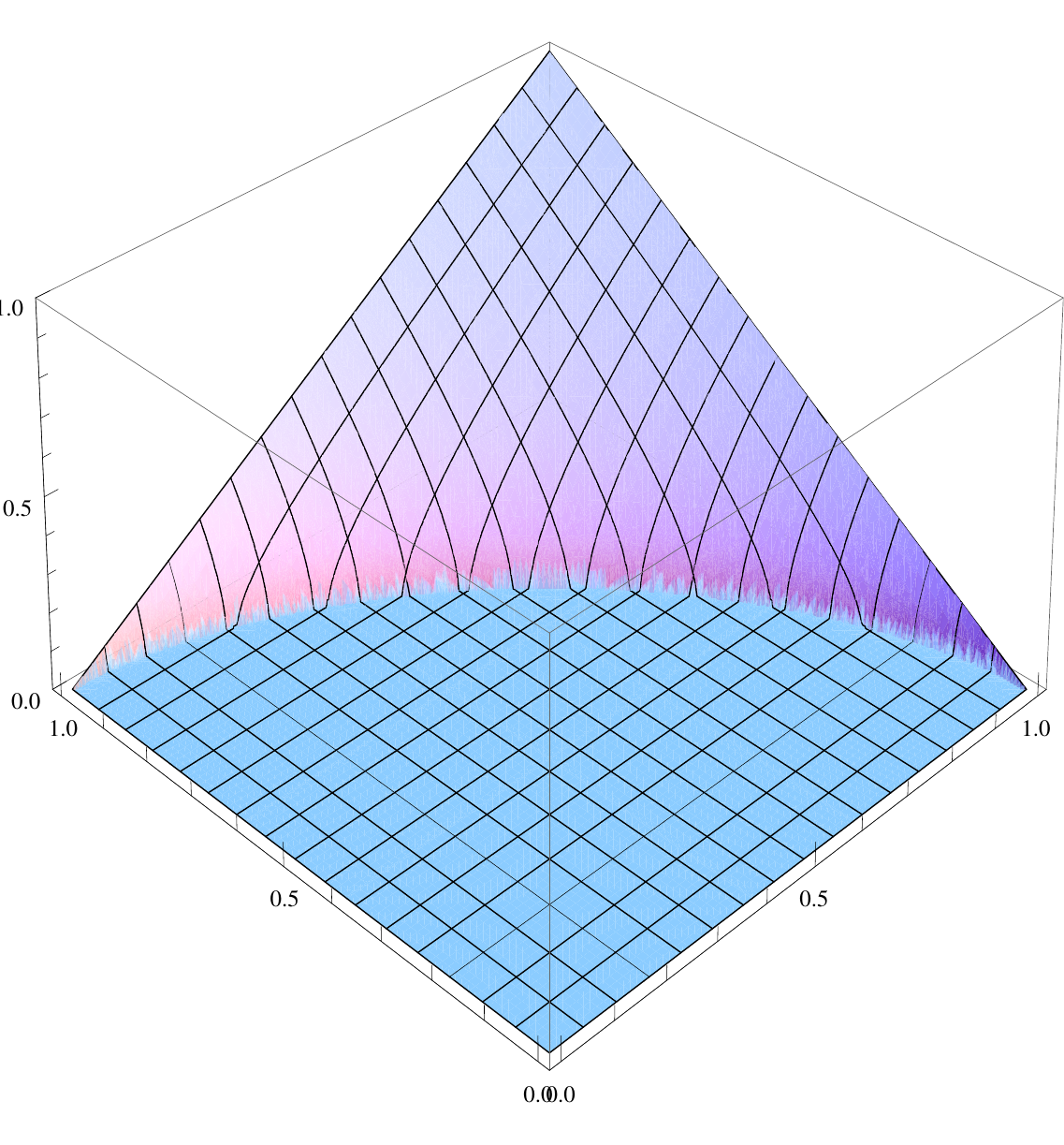}
\caption{The DP t-norm, the RDP $t$-norm $*_{2/3}$ and the Schweizer-Sklar $t$-norm $*_{SS}^{\log_{3/2} 2}$.}
\end{center}
\vspace{-2em}
\end{figure}

Moreover, $t$-norms isomorphic with the {\L}ukasiewicz one are continuous functions over $[0,1]^2$, while RDP $t$-norms are only left-continuous.

On the other hand, if we do not require a MTL logic to be standard complete, that is, complete with respect to a set of {\em standard} algebras
(algebras $([0,1],*,\Rightarrow,\min,\max,0,1)$, where $*$ is a $t$-norm, and $\Rightarrow$ its associated residuum),
we can naturally study the logic of residuated drastic product chains, whose class, clearly, does not contain any standard algebra.
Needless to say, the drastic product chains defined on subsets of $[0,1]$ coincide with the drastic product $t$-norm over their whole
universes.
Further, the logic of these chains is nicely axiomatised by a slightly weakened version of the law of the excluded middle.

It turns out that the logic of all residuated drastic product chains is the logic called S$_3$MTL in \cite{nog,hnp},
where some of its properties are stated and proved.

In this note we shall justify the study of S$_3$MTL, that we rename DP for Drastic Product logic, in the light
of several interesting logico-algebraic properties.
Further, we introduce a category dually equivalent to finite DP-algebras and utilise it to classify all schematic
extensions of DP, and to characterise the finitely generated free DP-algebras.

\section{Preliminaries}

We assume that the reader is acquainted with many-valued logics in Hájek's sense, and with their algebraic semantics.
We refer to \cite{haj,hand} for any unexplained notion.
We recall that MTL is the logic, on the language $\{\&,\wedge,\to,\bot\}$, of all left-continuous $t$-norms and their residua, and that its associated algebraic
semantics in the sense of Blok-Pigozzi \cite{bp} is the variety $\mathbb{MTL}$ of MTL-algebras $(A,*,\Rightarrow,\sqcap,\sqcup,0,1)$, that is, prelinear, commutative, bounded, integral, residuated lattices \cite{hand}. Derived connectives are negation $\neg \varphi := \varphi \to \bot$, top element $\top := \neg \bot$,
lattice disjunction $\varphi \vee \psi := ((\varphi \to \psi) \to \psi) \wedge ((\psi \to \varphi) \to \varphi)$.
The connectives $\&$,$\wedge$,$\vee$ are modeled by the monoidal and lattices operations $*$,$\sqcap$,$\sqcup$, while $\to$ by the residuum $\Rightarrow$ and $\bot,\top$ by the elements $0$,$1$. On the algebraic side: $\sim x \df x \Rightarrow 0$.

Every axiomatic extension L of MTL has its associate algebraic semantics: a subvariety $\mathbb{L}$ of $\mathbb{MTL}$ such that a formula $\varphi$ is a theorem of L iff the equation $\varphi = \top$
holds in any algebra of $\mathbb{L}$. $\mathbb{BL}$ is axiomatized as $\mathbb{MTL}$ plus $\varphi\land\psi=\varphi\&(\varphi\to\psi)$, $\mathbb{MV}$ as $\mathbb{BL}$ plus $\neg\neg\varphi=\varphi$, and $\mathbb{MV}_3$ as $\mathbb{MV}$ plus $\varphi\&\varphi=(\varphi\&\varphi)\&\varphi$.

\noindent{\em Drastic product chains} are MTL-chains $(A,*,\Rightarrow,\sqcap,\sqcup,0,1)$ s.t., for all $x,y \in A$,
\begin{equation}\label{eq:dp}
x*y\df\begin{cases}
0&\text{if }x,y<1,\\
\min\{x,y\}&\text{otherwise}.	
\end{cases}
\end{equation}
We denote by $\mathbb{DP}$ the subvariety of $\mathbb{MTL}$ generated by all drastic product chains. The members of $\mathbb{DP}$ are called \emph{drastic product algebras}.

Noguera points out in \cite[Page 108]{nog} that $\mathbb{DP}$ coincides with the variety named $\mathbb{S}_{3}\mathbb{MTL}$.
Each logic in the hierarchy S$_k$MTL (for $2 \leq k \in \mathbb{Z}$) is axiomatised by a generalised form of the {\em excluded middle} law:
$\varphi \vee \neg(\varphi^{k-1})$ (where $\varphi^0 := \top$ and $\varphi^n := \varphi^{n-1} \& \varphi$), whence the logic of drastic product DP is axiomatised
by
\begin{equation*}\label{eq:DP}
\tag*{(DP)}\varphi\vee\neg(\varphi^{2}).
\end{equation*}
Clearly, the logic S$_2$MTL, axiomatised as MTL plus $\varphi \vee \neg \varphi$ is just classical Boolean logic,
as the latter axiom is {\em the} excluded middle law.

The basic example of a drastic product chain is, for any real $c$, with $0 < c < 1$, the algebra
$[0,1]^c := ([0,c] \cup \{1\},*,\Rightarrow,\sqcap,\sqcup,0,1)$ where $*$ is defined as in (\ref{eq:dp}), while
\begin{equation}\label{eq:implication}
x\Rightarrow y\df\begin{cases}
1&\text{if }x\leq y,\\
c&\text{if }1>x>y,\\
y&\text{if }x=1.
\end{cases}
\end{equation}

Equations (\ref{eq:dp}) and (\ref{eq:implication}) express the operations of any DP-chain. Indeed:

\begin{lemma}\label{lemma:coatom}
A non-trivial MTL-chain $\mathcal{A} = (A,*,\Rightarrow,\sqcap,\sqcup,0,1)$ is a DP-chain iff it has a coatom $c$, and $x*x = 0$
for all $1 > x \in A$.
If $\mathcal{A}$ is a DP-chain then $*$ and $\Rightarrow$ are defined as in (\ref{eq:dp}) and (\ref{eq:implication}).
Moreover,
if $c > 0$ then $c = \sim c$ is its only negation fixpoint.
\end{lemma}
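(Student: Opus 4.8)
The plan is to prove the biconditional in the two directions, extracting the formula for $\Rightarrow$ along the way, and then to obtain the statement about the negation fixpoint as a short consequence of that formula. For the ``only if'' direction I start from a DP-chain $\mathcal{A}$: the identity $x*x=0$ for all $x<1$ is read off directly from (\ref{eq:dp}). To locate the coatom I split off the degenerate subcase $A=\{0,1\}$ (where $0$ is the coatom) and, when $A$ contains some $a$ with $0<a<1$, I compute $\sim a = a\Rightarrow 0$. The key observation is that $a*z=0$ for every $z<1$ while $a*1=a\neq 0$, so $\{z\in A : a*z\le 0\}=\{z\in A : z<1\}$; since in any residuated lattice $x\Rightarrow y$ is the maximum of $\{z : x*z\le y\}$ (because $x*(x\Rightarrow y)\le y$), this maximum exists and is the sought coatom $c$. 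The same style of reasoning --- a three-way case analysis on $x\le y$, on $1>x>y$, and on $x=1$, using that $1$ is the monoidal unit --- then yields (\ref{eq:implication}).

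For the ``if'' direction I take a non-trivial MTL-chain with a coatom and with $x*x=0$ for all $x<1$, and verify (\ref{eq:dp}). When $x=1$ or $y=1$ the product is $\min\{x,y\}$ because $1$ is both unit and top. When $x,y<1$, linearity lets me assume $x\le y$, and monotonicity of $*$ gives the squeeze $0=x*x\le x*y\le y*y=0$, so $x*y=0$; hence $\mathcal{A}$ is a DP-chain. (The coatom hypothesis is in fact redundant for this direction, being recoverable from the first part.) Finally, for the last claim I use (\ref{eq:implication}): if $c>0$, then taking $x=c$, $y=0$, so that $1>x>y$, gives $\sim c=c$; and if $\sim a=a$ for some $a$, then (\ref{eq:implication}) rules out $a=0$ and $a=1$ by non-triviality and forces $\sim a=c$ whenever $0<a<1$, so $a=c$.

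The whole argument is essentially a sequence of short case analyses, so I do not expect a serious obstacle; the one place that needs a little care is the existence of the coatom in the ``only if'' direction, where one must remember that the residuum is genuinely attained as a maximum of $\{z : x*z\le y\}$ and must dispose of the two-element chain separately.
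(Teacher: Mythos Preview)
Your proof is correct and follows essentially the same route as the paper's: split off the two-element case, locate the coatom as a suitable residuum (you use $\sim a$ for some $0<a<1$, the paper uses $x\Rightarrow y$ for arbitrary $y<x<1$), derive (\ref{eq:implication}) by cases, and for the converse use monotonicity of $*$ together with $x*x=0$. The only notable differences are cosmetic: for the ``if'' direction the paper verifies the axiom \ref{eq:DP} (via $x*x\in\{0,1\}$) rather than checking (\ref{eq:dp}) directly as you do, and for uniqueness of the fixpoint the paper invokes the general fact that an MTL-chain has at most one negation fixpoint, whereas you read it off from (\ref{eq:implication}).
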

\begin{proof}
Assume $\mathcal{A}$ is a DP-chain: if $\mathcal{A} \cong \{0,1\}$ the claim trivially holds.
Assume then $\vert A\vert>2$. Clearly, $x * x = 0$ for all $1 > x \in A$.
Since $*$ is non-decreasing, $x*y = 0$ for all $x,y < 1$.
Take $y < x < 1 \in A$, and let $c = x \Rightarrow y$. By the properties
of residuum, $c < 1$. Take now any $z < 1$ in $A$.
Since $x * z = 0$ we have $z \leq c$. Hence $c$ is the coatom of $A$.
It is now easy to check that (\ref{eq:dp}) and (\ref{eq:implication})
define $*$ and $\Rightarrow$.
For the other direction notice that if $\mathcal{A}$ is an MTL-chain satisfying
the two assumptions, then $x*x \in \{0,1\}$ for all $x \in A$, and hence
$\mathcal{A}$ satisfies \ref{eq:DP}.
The lemma follows noting that if $c > 0$ then $c * 1 = c$ and $c * c = 0$, hence
$c = \sim c$ (an MTL-chain may have at most one negation fixpoint).
\end{proof}

\begin{remark}\label{rem:standardfail}
Lemma \ref{lemma:coatom} shows that $\mathbb{DP}$ does not contain any
{\em standard algebra}, that is a chain whose lattice reduct is $([0,1],\leq_{\mathbb{R}})$
(where $\leq_{\mathbb{R}}$ denotes the restriction of the standard order of real numbers).
It must be stressed, however, that for any $0 < c < 1$, the operation $*$ of the algebra $[0,1]^c$
coincides with the drastic product $t$-norm $*_D$ wherever defined.
\end{remark}
\section{DP, RDP and WNM}
We recall that $\mathbb{WNM}$ and $\mathbb{RDP}$ are the subvarieties of $\mathbb{MTL}$ respectively
satisfying the identities \ref{eq:wnm}, and both \ref{eq:wnm} and \ref{eq:rdp}, given below.
\begin{align*}
\tag*{(wnm)}&\neg(\varphi\&\psi)\vee((\varphi\land \psi)\to(\varphi\&\psi))\,=\,\top\,.\label{eq:wnm}\\
\tag*{(rdp)}&(\varphi\to \neg \varphi)\vee\neg\neg \varphi\,=\,\top\,.\label{eq:rdp}
\end{align*}
\begin{proposition}\label{rrdp}
$\mathbb{DP}\subset\mathbb{RDP}\subset\mathbb{WNM}$.
\end{proposition}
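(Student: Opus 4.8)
The plan is to prove the two inclusions first, and then to exhibit algebras separating the three varieties.

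\emph{Inclusions.} Since $\mathbb{DP}$ is by definition the subvariety of $\mathbb{MTL}$ generated by all DP-chains, and since validity of an identity passes to homomorphic images, subalgebras and direct products, in order to prove $\mathbb{DP}\subseteq\mathbb{RDP}$ it suffices to check that \ref{eq:wnm} and \ref{eq:rdp} — which together axiomatise $\mathbb{RDP}$ within $\mathbb{MTL}$ — hold in every DP-chain $\mathcal{A}$. The two-element Boolean chain is trivial, so assume $|A|>2$ and let $c$ be its coatom; by Lemma \ref{lemma:coatom}, $*$ and $\Rightarrow$ are as in (\ref{eq:dp}) and (\ref{eq:implication}), and $c>0$. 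For \ref{eq:wnm}: if $x,y<1$ then $x\& y=0$, so $\neg(x\& y)=1$; and if $x=1$ or $y=1$ then $x\& y=x\wedge y$, so $(x\wedge y)\to(x\& y)=1$; either way the disjunction evaluates to $1$. For \ref{eq:rdp}: if $0<x<1$ then $\neg x=c$ by (\ref{eq:implication}), hence $x\to\neg x=x\Rightarrow c=1$ because $x\le c$; the remaining cases $x=0$ and $x=1$ reduce to $0\to 1=1$ and $\neg\neg 1=1$ respectively. Hence $\mathbb{DP}\subseteq\mathbb{RDP}$. The inclusion $\mathbb{RDP}\subseteq\mathbb{WNM}$ needs no proof, since $\mathbb{RDP}$ is \emph{defined} as $\mathbb{WNM}$ together with \ref{eq:rdp}.

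\emph{Properness.} To separate $\mathbb{DP}$ from $\mathbb{RDP}$ I would use the three-element G\"odel chain $\mathbb{G}_3$. Every G\"odel chain validates \ref{eq:wnm} (there $\&$ coincides with $\wedge$) and \ref{eq:rdp} (there $\neg\neg x=1$ whenever $x>0$), so $\mathbb{G}_3\in\mathbb{RDP}$; but its middle element $a$ satisfies $a\& a=a\ne 0$, so $a\vee\neg(a\& a)=a\ne 1$, i.e.\ \ref{eq:DP} fails, and therefore $\mathbb{G}_3\notin\mathbb{DP}$. One may equally well invoke Remark \ref{rem:standardfail}: a standard revised drastic product chain belongs to $\mathbb{RDP}$ but, being a standard algebra, not to $\mathbb{DP}$. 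To separate $\mathbb{RDP}$ from $\mathbb{WNM}$ I would use a four-element nilpotent minimum chain with no negation fixpoint, say $0<a<b<1$ with $\neg a=b$ and $\neg b=a$: it validates \ref{eq:wnm} (every nilpotent minimum chain does), but \ref{eq:rdp} fails at $b$, since $b\to\neg b=b\Rightarrow a=a$ and $\neg\neg b=\neg a=b$, so $(b\to\neg b)\vee\neg\neg b=b\ne 1$.

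\emph{On the difficulty.} Nothing here is deep. Once a variety inclusion is reduced to validity of the defining identities on the generating chains, $\mathbb{DP}\subseteq\mathbb{RDP}$ is a short case distinction driven by Lemma \ref{lemma:coatom}, and properness is witnessed by two small finite chains. The only points that genuinely call for a (one-line) argument are this reduction and the verification that $\mathbb{G}_3\in\mathbb{RDP}$ and that the nilpotent minimum chain lies in $\mathbb{WNM}$ — both immediate from the shape of the operations in G\"odel, resp.\ nilpotent minimum, chains.
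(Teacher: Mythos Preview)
Your argument is correct and, for the inclusions, follows exactly the paper's line: invoke Lemma~\ref{lemma:coatom} to see that every DP-chain satisfies both \ref{eq:wnm} and \ref{eq:rdp}, whence $\mathbb{DP}\subseteq\mathbb{RDP}$; the other inclusion is definitional. The paper's proof is the one-sentence version of your case analysis and omits the properness witnesses entirely, so your treatment with $\mathbb{G}_3$ and the four-element NM-chain is a genuine addition rather than a different approach.
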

\begin{proof}
Lemma \ref{lemma:coatom} shows immediately that DP-chains satisfy both identities. 
\end{proof}

We recall that a variety $\mathbb{V}$ is {\em locally finite}
whenever every finitely generated subalgebra of an algebra in $\mathbb{V}$ is finite.
Equivalently, free finitely generated algebras are finite.
In a locally finite variety the three classes of finitely generated, finitely presented, and finite algebras coincide.
Now, since $\mathbb{WNM}$ is locally finite (see \cite[Proposition 9.15]{nog}), from \Cref{rrdp} we obtain:

\begin{corollary}\label{teo:cf}
$\mathbb{DP}$ is locally finite and is generated by the class of all finite DP-chains.
\end{corollary}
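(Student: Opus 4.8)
The plan is to split the statement into its two assertions — that $\mathbb{DP}$ is locally finite, and that it is generated by the class of all finite DP-chains — and to prove them in that order, since the second will use the first.

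For local finiteness I would exploit the inclusion $\mathbb{DP}\subseteq\mathbb{WNM}$ of \Cref{rrdp}: a finitely generated subalgebra of a DP-algebra is, in particular, a finitely generated subalgebra of a WNM-algebra, hence finite by the cited local finiteness of $\mathbb{WNM}$. Thus every subvariety of a locally finite variety is again locally finite, and $\mathbb{DP}$ is one such; equivalently, the free $n$-generated DP-algebra is finite for every $n$.

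For the generation claim, recall that $\mathbb{DP}$ is by definition generated by all DP-chains, so it suffices to show that an arbitrary DP-chain $\mathcal{C}$ belongs to the variety generated by the finite DP-chains. Here I would run the standard compactness-of-identities argument, using \Cref{lemma:coatom} to remain within the right class of algebras: by the local finiteness just established, $\mathcal{C}$ is the directed union of its finite subalgebras; each of these is a finite sub-MTL-chain of $\mathcal{C}$ validating \ref{eq:DP}, and since a non-trivial finite chain automatically has a coatom while, in a chain, \ref{eq:DP} forces $x*x=0$ for every $x<1$, \Cref{lemma:coatom} identifies it as a finite DP-chain. Now any identity failing in $\mathcal{C}$ involves only finitely many variables, whose interpretations lie in a common finite subalgebra, so that identity already fails there; hence $\mathcal{C}$ validates every identity true in all finite DP-chains, i.e.\ it lies in the variety they generate. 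Since the reverse inclusion is obvious, $\mathbb{DP}$ coincides with the variety generated by all finite DP-chains.

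I do not anticipate a real obstacle: the only ingredient that is not elementary is the local finiteness of $\mathbb{WNM}$, which we are allowed to assume. The one spot needing a little care is checking that the algebras produced along the way are genuine DP-chains — and this is exactly what \Cref{lemma:coatom} delivers, by characterising DP-chains among MTL-chains through the presence of a coatom together with $x*x=0$ below the top (equivalently, \ref{eq:DP} in the linearly ordered case). An alternative route for the generation part would be to invoke that a locally finite variety is generated by its finitely generated free algebras, each of which is a finite subdirect product of finite, hence linearly ordered, subdirectly irreducible quotients, and these are finite DP-chains; but the directed-union argument seems the most direct.
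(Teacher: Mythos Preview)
Your argument is correct and follows the paper's approach: local finiteness is inherited from $\mathbb{WNM}$ via \Cref{rrdp}, exactly as the paper does in the sentence introducing the corollary. For the generation claim the paper gives no explicit argument at all, simply stating the corollary; your directed-union reasoning (and the alternative via finite subdirectly irreducible quotients you mention) is a sound way to spell out what the paper leaves implicit.
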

\begin{proposition}\label{lemma:intersections}
$\mathbb{DP} \cap \mathbb{BL} = \mathbb{NM} \cap \mathbb{BL} = \mathbb{MV}_3$.
$\mathbb{RDP} \cap \mathbb{BL} = \mathbb{WNM} \cap \mathbb{BL} = \mathbb{MV}_3 \oplus \mathbb{G}$,
where $\mathbb{NM}$ (nilpotent minimum) is $\mathbb{WNM}$ plus $\neg\neg\varphi=\varphi$, and
$\mathbb{MV}_3 \oplus \mathbb{G}$ is the variety generated by the ordinal sum of the $3$-element MV-chain with the standard G\"{o}del algebra.
\end{proposition}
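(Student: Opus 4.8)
The plan is to reduce each of the two displayed identities to a classification of finite \emph{chains}. Note first that all four varieties occurring in the statement are subvarieties of $\mathbb{WNM}$: for $\mathbb{DP}\cap\mathbb{BL}$ and $\mathbb{RDP}\cap\mathbb{BL}$ this is \Cref{rrdp}, for $\mathbb{NM}\cap\mathbb{BL}$ and $\mathbb{WNM}\cap\mathbb{BL}$ it is immediate from the definition of $\mathbb{NM}$. Since $\mathbb{WNM}$ is locally finite, each of them is locally finite, hence generated by its finite members; and as every finite MTL-algebra is a subdirect product of finite chains lying in the same variety, each of them is generated by its finite chains. So it suffices, in each case, to determine which finite chains belong to the intersection and to recognise the generated variety. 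Throughout I use \Cref{lemma:coatom} together with the ordinal-sum structure theory of BL-chains (cf.\ \cite{hand}).

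For the first identity, let $\mathcal{A}$ be a finite BL-chain that is a DP-chain. If $|A|\le 2$ then $\mathcal{A}$ is the two-element Boolean chain $\mathbf 2$. If $|A|>2$, by \Cref{lemma:coatom} $\mathcal{A}$ has a coatom $c>0$ with $c*c=0$ and, by \eqref{eq:implication}, $c\Rightarrow x=c$ for every $x<c$. If some $x$ satisfied $0<x<c$, divisibility would give $x=c\sqcap x=c*(c\Rightarrow x)=c*c=0$, a contradiction; hence $A=\{0,c,1\}$, which is exactly the three-element MV-chain $\mathbf 3$. A direct check shows $\mathbf 3$ satisfies \ref{eq:DP} and is a BL-chain, and $\mathbf 2$ is a subalgebra of $\mathbf 3$; therefore $\mathbb{DP}\cap\mathbb{BL}=\mathbb{MV}_3$. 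The same divisibility trick disposes of $\mathbb{NM}\cap\mathbb{BL}$: in a finite NM-chain $0=a_0<a_1<\dots<a_{n-1}=1$ with $n\ge 4$, the nilpotent-minimum operation gives $a_{n-2}*a_1=0$ and $a_{n-2}\Rightarrow a_1=a_1$, so divisibility forces $a_1=a_{n-2}\sqcap a_1=a_{n-2}*(a_{n-2}\Rightarrow a_1)=0$, absurd; hence the only finite NM BL-chains are again $\mathbf 2$ and $\mathbf 3$, and $\mathbb{NM}\cap\mathbb{BL}=\mathbb{MV}_3$.

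For the second identity the core is the structural step. Writing a BL-chain as an ordinal sum of Wajsberg hoops, I claim that in a WNM BL-chain every non-idempotent element lies in the bottom component. Indeed, for a non-idempotent $x$ the law \ref{eq:wnm} (with $\varphi=\psi=x$) forces $x*x=0$, while $x*x$ dominates the bottom element of the component containing $x$; hence that bottom is the global $0$. Consequently the bottom component is an MV-chain satisfying \ref{eq:wnm}, i.e.\ $\mathbf 2$ or $\mathbf 3$ (by the computation in the previous paragraph applied inside a component), and everything above it is idempotent, i.e.\ a Gödel chain. Thus the finite WNM BL-chains are exactly the finite Gödel chains (bottom component $\mathbf 2$) together with the chains $\mathbf 3\oplus G$, $G$ a finite Gödel chain (bottom component $\mathbf 3$); this family $\mathcal F$ generates precisely $\mathbb{MV}_3\oplus\mathbb{G}$. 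Each member of $\mathcal F$ is readily checked to satisfy also \ref{eq:rdp}, so $\mathcal F\subseteq\mathbb{RDP}\cap\mathbb{BL}$. Since $\mathbb{RDP}\subseteq\mathbb{WNM}$, we get the chain of inclusions $\mathbb{MV}_3\oplus\mathbb{G}=\mathbb V(\mathcal F)\subseteq\mathbb{RDP}\cap\mathbb{BL}\subseteq\mathbb{WNM}\cap\mathbb{BL}=\mathbb V(\mathcal F)$, whence all three coincide.

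I expect the main obstacle to be the BL-chain structural analysis in the last paragraph: one must handle the ordinal-sum decomposition carefully, in particular the way a higher component's bottom element fails to be the global $0$, and make sure the case analysis genuinely rules out all the offending Wajsberg hoop types — larger MV-chains and cancellative hoops — leaving only $\mathbf 2$, $\mathbf 3$ and ordinal sums of copies of $\mathbf 2$. By contrast the first identity is routine once \Cref{lemma:coatom} (and the analogous description of NM-chains) is available, the only delicate point being the short divisibility computation that collapses any such chain with more than three elements.
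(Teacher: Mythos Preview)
Your argument is correct and, for the equalities $\mathbb{RDP}\cap\mathbb{BL}=\mathbb{WNM}\cap\mathbb{BL}=\mathbb{MV}_3\oplus\mathbb{G}$, follows the same ordinal-sum analysis the paper sketches (first component an MV-chain with at most three elements, remaining components trivial), only with considerably more detail and an added preliminary reduction to finite chains via the local finiteness of $\mathbb{WNM}$. For the equalities $\mathbb{DP}\cap\mathbb{BL}=\mathbb{NM}\cap\mathbb{BL}=\mathbb{MV}_3$ the paper simply cites \cite{nog,hnp}, whereas you supply self-contained divisibility arguments; your proofs there are sound (note that an MV-chain satisfying \ref{eq:wnm} is involutive, hence lies in $\mathbb{NM}\cap\mathbb{BL}$, so your NM computation does legitimately bound the bottom component in the second part). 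One small point worth making explicit: in the hoop ordinal-sum decomposition the bottom component shares the global $1$ and is therefore a genuine subalgebra, which is what justifies your claim that it inherits \ref{eq:wnm}.
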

\begin{proof}
The first two equalities are shown in \cite{nog,hnp}. For the other two equalities, a direct inspection shows that a BL-chain satisfies \ref{eq:wnm} iff it is isomorphic to an ordinal sum whose first component is an MV-chain with no more than three elements, and the others (if present) are isomorphic to $\{0,1\}$. Finally, note that \ref{eq:rdp} holds in a BL-chain iff it is isomorphic to an ordinal sum whose first component is an MV-chain with no more than three elements. So, if a BL-chain models \ref{eq:wnm}, then it satisfies also \ref{eq:rdp}.
\end{proof}
\section{Canonical completeness}

We have seen that DP is axiomatised from MTL by a weakened form of excluded middle law, and that algebras $[0,1]^c$
are, defensibly, good approximators of the drastic product $t$-norm, in the sense of \cite{ssac,rdp}.
In this section we show that each algebra $[0,1]^c$ is a canonical model of the logic DP.

Recall that an extension L of MTL is {\em standard complete} if $\mathbb{L}$ is generated
by a set of standard algebras (see Remark \ref{rem:standardfail}).

Notice that DP is not standard complete since Lemma \ref{lemma:coatom} shows that each DP-chain must have a coatom. However,
it must be noticed that the form of completeness DP enjoys is precisely the same
that is enjoyed by classical propositional logic, which technically is not a standard
complete logic either.
To stress this fact we propose here the following notion of completeness,
which strengthens the notion of single-chain completeness (an extension L of MTL is {\em single-chain complete} whenever its associated variety $\mathbb{L}$ is generated by a chain, \cite{ssc1}).
\begin{definition}\label{def:canonical}
{\rm
A schematic extension L of MTL is {\em canonically complete} if it is complete with
respect to a single algebra $\mathcal{A}$, called {\em canonical model} of L, such that:
\begin{itemize}
\item The lattice reduct of $\mathcal{A}$ is a sublattice of $\lag [0,1],\leq_\mathbb{R}\rog$.
\item For every L-chain $\mathcal{B}$ whose lattice reduct is a sublattice of $\lag [0,1],\leq_\mathbb{R}\rog$, there is $\mathcal{A}'\cong \mathcal{A}$ such that $\lag B,\leq_B\rog$ is a sublattice of $\lag A',\leq_{A'}\rog$.
\end{itemize}
}
\end{definition}
In other terms, $\mathcal{A}$ generates $\mathbb{L}$ and is (up to isomorphism of MTL-algebras) lattice-inclusion-maximal among the algebras in $\mathbb{L}$ whose lattice reduct is a sublattice of
$\lag [0,1],\leq_\mathbb{R}\rog$.
With this definition in place, we list some examples:
\begin{itemize}
\item Classical propositional logic is canonically complete even though
not standard complete;
\item G\"{o}del, product and {\L}ukasiewicz logic are both canonically and standard complete,
as it is BL (w.r.t. the ordinal sum of $\omega$ copies of the standard MV-algebra, for instance) \cite{hand,haj,blvar}.
\item MTL is standard complete, but it is not known whether it is canonically complete, nor single chain complete (\cite{ssc1}). The same
applies to IMTL.
\item The logic WCBL (\cite[Ch. 7.2,7.3]{nog}) obtained extending BL with the weak cancellativity axiom
$\neg(\varphi\&\psi)\vee ((\varphi\to(\varphi\&\psi))\to\psi)$ is standard complete, being complete w.r.t. the set formed by the standard MV-algebra
and the standard product algebra,
but it is not canonically complete. Indeed, an MTL-chain belongs to this variety iff it is a product or an MV-chain: hence WCBL is not single chain complete.
\item Each logic BL$^n$ (with $n \geq 2$), axiomatised as BL plus the $n$-contraction axiom $\varphi^n\to\varphi^{n+1}$ (see \cite{bln}),
is neither standard nor canonically complete.
In the associated variety the only standard algebra is the standard G\"{o}del algebra, but there are no generic chains at all.
\end{itemize}
\begin{lemma}\label{stdiso}
All the algebras of the form $[0,1]^c$ are isomorphic.
\end{lemma}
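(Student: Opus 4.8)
The plan is to show that for any two reals $0 < c < d < 1$, the algebras $[0,1]^c$ and $[0,1]^d$ are isomorphic as MTL-algebras, by writing down an explicit bijection between their underlying sets that respects the order and then checking it is a homomorphism. Recall that the universe of $[0,1]^c$ is $[0,c] \cup \{1\}$, equipped with the restriction of the real order, with operations given by \eqref{eq:dp} and \eqref{eq:implication}. First I would fix the map $\phi \colon [0,1]^c \to [0,1]^d$ defined by $\phi(x) = (d/c)\,x$ for $x \in [0,c]$ and $\phi(1) = 1$. This is clearly a well-defined order isomorphism between the lattice reducts: it maps $[0,c]$ linearly and increasingly onto $[0,d]$, sends $0$ to $0$ and $1$ to $1$, and hence preserves $\sqcap = \min$ and $\sqcup = \max$.

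Next I would verify that $\phi$ preserves $*$ and $\Rightarrow$. For the monoidal operation, by \eqref{eq:dp} we have $x * y = 0$ whenever $x, y < 1$, and since $\phi(x), \phi(y) < 1$ exactly when $x, y < 1$ (as $\phi$ is an order isomorphism fixing $1$), both sides give $0$; and if, say, $x = 1$ then $x * y = y$ while $\phi(x) * \phi(y) = 1 * \phi(y) = \phi(y)$, so $\phi(x*y) = \phi(y) = \phi(x) * \phi(y)$. The case $y = 1$ is symmetric. For the residuum, I would run through the three clauses of \eqref{eq:implication}: if $x \le y$ then $\phi(x) \le \phi(y)$ and both implications equal $1$; if $1 > x > y$ then $1 > \phi(x) > \phi(y)$, so $x \Rightarrow y = c$ and $\phi(x) \Rightarrow \phi(y) = d = \phi(c)$, matching $\phi(x \Rightarrow y) = \phi(c) = d$; and if $x = 1$ then $x \Rightarrow y = y$ and $\phi(x) \Rightarrow \phi(y) = 1 \Rightarrow \phi(y) = \phi(y)$, again in agreement. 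Since $\bot = 0$ and $\top = 1$ are preserved by construction, $\phi$ is an MTL-algebra isomorphism.

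I do not anticipate a genuine obstacle here: the statement is essentially a bookkeeping exercise, since \Cref{lemma:coatom} has already pinned down the operations of every DP-chain of this form completely in terms of the coatom, so the only freedom left is the position of the coatom in $[0,1]$, which an affine rescaling absorbs. The one point requiring a modicum of care is that $\phi$ must be checked to be a bijection onto $[0,1]^d$ as a set — i.e. that the image of $[0,c] \cup \{1\}$ under $\phi$ is exactly $[0,d] \cup \{1\}$ — and that the inverse $x \mapsto (c/d)x$ on $[0,d]$, $1 \mapsto 1$ is its two-sided inverse; both are immediate. Alternatively, and perhaps more cleanly, one can avoid coordinates entirely: by \Cref{lemma:coatom} any isomorphism of the lattice reducts of two DP-chains that sends coatom to coatom automatically commutes with $*$ and $\Rightarrow$, and since $[0,c]$ and $[0,d]$ are order-isomorphic real intervals, gluing such an isomorphism with the assignment $1 \mapsto 1$ does the job.
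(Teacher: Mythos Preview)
Your proof is correct. The paper's own proof consists of the single line ``By Lemma \ref{lemma:coatom}.'', which is precisely the coordinate-free alternative you sketch at the end: once Lemma \ref{lemma:coatom} has shown that $*$ and $\Rightarrow$ on a DP-chain are completely determined by the order and the position of the coatom, any order isomorphism between $[0,c]\cup\{1\}$ and $[0,d]\cup\{1\}$ sending $c$ to $d$ is automatically an MTL-isomorphism, and such an order isomorphism obviously exists. Your explicit affine rescaling is a concrete witness for this, so your argument is just a more detailed version of the paper's.
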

\begin{proof}
By Lemma \ref{lemma:coatom}.
\end{proof}
\begin{theorem}\label{thm:infinitechain}
$\mathbb{DP}$ is generated by any infinite DP-chain.
\end{theorem}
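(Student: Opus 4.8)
The plan is to exploit two facts already in hand: $\mathbb{DP}$ is locally finite (\Cref{teo:cf}), hence generated by its finite chains, and by \Cref{lemma:coatom} every finite DP-chain is, up to isomorphism, determined by a single parameter, namely the size of the "lower block" below the coatom $c$. Concretely, write $C_n$ for the $(n+2)$-element DP-chain $\{0=a_0 < a_1 < \dots < a_n = c < 1\}$ with operations given by (\ref{eq:dp}) and (\ref{eq:implication}); by \Cref{lemma:coatom} every finite DP-chain is isomorphic to some $C_n$ ($n \geq 0$), and $C_0 \cong \{0,1\}$. It therefore suffices to show that every $C_n$ embeds into the given infinite DP-chain $\mathcal{D}$, since then $\mathbb{DP} = \mathbf{V}(\{C_n : n \geq 0\}) \subseteq \mathbf{V}(\mathcal{D}) \subseteq \mathbb{DP}$.

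First I would record the structure of an arbitrary infinite DP-chain $\mathcal{D}$. By \Cref{lemma:coatom} it has a coatom $c$, and $x*x=0$ for all $x<1$, with $*$ and $\Rightarrow$ forced by (\ref{eq:dp}) and (\ref{eq:implication}); moreover the underlying lattice order is an arbitrary linear order with a top, a coatom, and a bottom $0$ (the bottom exists since MTL-algebras are bounded). Since $\mathcal{D}$ is infinite, the set $D \setminus \{1\}$ (equivalently the interval $[0,c]$) is infinite, so it contains elements $0 = d_0 < d_1 < \dots < d_n = c$ for every $n$ — just pick any $n{-}1$ elements strictly between $0$ and $c$, which is possible because the interval $(0,c)$ must be infinite (if it were finite, $D\setminus\{1\}$ would be finite).

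The key step is then to check that the map $\iota\colon C_n \to \mathcal{D}$ sending $a_i \mapsto d_i$ for $i \leq n$ and $1 \mapsto 1$ is an MTL-algebra embedding. It is clearly an order embedding, hence preserves $\sqcap,\sqcup,0,1$ and the coatom. Preservation of $*$ is immediate from (\ref{eq:dp}): if both arguments are below $1$ the product is $0$ on both sides, and otherwise it is the minimum, which $\iota$ preserves. Preservation of $\Rightarrow$ follows from (\ref{eq:implication}) by a three-case split ($x \leq y$; $1 > x > y$, where both sides give the respective coatom $c$ and $\iota(c)=c$; and $x = 1$), using that $\iota$ is an order embedding mapping coatom to coatom. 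This is the only place any computation is needed, and it is routine. I expect the main (minor) obstacle to be purely expository: being careful that "infinite DP-chain" forces the interval $[0,c]$ — not merely $D$ — to be infinite, so that arbitrarily long finite sub-chains with bottom $0$ and top $c$ genuinely exist; once that observation is isolated, the embedding argument is mechanical and the theorem follows from \Cref{teo:cf}.
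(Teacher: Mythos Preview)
Your proposal is correct and follows essentially the same route as the paper: reduce to finite DP-chains via \Cref{teo:cf}, then embed each finite DP-chain into the given infinite chain by an order-injection sending $0\mapsto 0$, $1\mapsto 1$, and coatom to coatom, invoking \Cref{lemma:coatom} to conclude this is an MTL-embedding. The only difference is expository: you spell out the case split for preservation of $*$ and $\Rightarrow$ and the observation that $(0,c)$ is infinite, whereas the paper leaves these implicit.
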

\begin{proof}
By \Cref{teo:cf}, we have that if an equation fails in some DP-algebra, then it fails in some finite DP-chain.
Take now an infinite DP-chain $\mathcal{A}$. Denote by $c$ its coatom.
By Lemma \ref{lemma:coatom}, we have that every finite DP-chain $\mathcal{B}$ embeds into $\mathcal{A}$:
Trivially, $\{0,1\}\hookrightarrow\mathcal{A}$.
If $\vert\mathcal{B}\vert>2$, call $b$ its coatom.
Then every injective order preserving mapping $\phi$ from $\mathcal{B}$ to $\mathcal{A}$ such that $\phi(0)=0$, $\phi(1)=1$, and $\phi(b)=c$ (note that such a map always exists, since $\mathcal{B}$ is finite) is such that $\phi \colon \mathcal{B} \hookrightarrow \mathcal{A}$.
Hence every equation that fails in some finite DP-chain also fails in $\mathcal{A}$.
\end{proof}
Theorem \ref{thm:infinitechain} together with \Cref{teo:cf} proves the following result:
\begin{theorem}\label{thm:unboundedset}
$\mathbb{DP}$ is generated by any set of DP-chains of unbounded cardinality.
\end{theorem}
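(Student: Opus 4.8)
Let $\mathcal{K}$ be a set of DP-chains of unbounded cardinality, and let $\mathbb{V}$ be the variety it generates. The plan is to establish the two inclusions $\mathbb{V}\subseteq\mathbb{DP}$ and $\mathbb{DP}\subseteq\mathbb{V}$. The first is immediate, since every member of $\mathcal{K}$ is a DP-chain and hence lies in $\mathbb{DP}$. For the converse I argue that every equation $\varepsilon$ that fails somewhere in $\mathbb{DP}$ already fails in some member of $\mathcal{K}$.

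So suppose $\varepsilon$ fails in $\mathbb{DP}$. By \Cref{teo:cf} the variety $\mathbb{DP}$ is locally finite and generated by its finite chains, hence $\varepsilon$ fails in some \emph{finite} DP-chain $\mathcal{B}$; put $n=\vert B\vert$. Now split according to the nature of $\mathcal{K}$. If $\mathcal{K}$ contains an infinite chain $\mathcal{A}$, then $\mathcal{A}$ generates the whole of $\mathbb{DP}$ by \Cref{thm:infinitechain}, so $\varepsilon$ fails in $\mathcal{A}\in\mathcal{K}$ and we are done. Otherwise every member of $\mathcal{K}$ is finite, and since $\mathcal{K}$ has unbounded cardinality there is some $\mathcal{A}\in\mathcal{K}$ with $\vert A\vert\geq n$.

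It remains to embed $\mathcal{B}$ into this $\mathcal{A}$, which is exactly the construction used in the proof of \Cref{thm:infinitechain}. If $\vert B\vert=2$ then $\mathcal{B}\cong\{0,1\}\hookrightarrow\mathcal{A}$. Otherwise let $b$ and $c$ be the coatoms of $\mathcal{B}$ and $\mathcal{A}$ respectively, and take any injective order-preserving map $\phi\colon B\to A$ with $\phi(0)=0$, $\phi(1)=1$ and $\phi(b)=c$; such a map exists because $\vert A\setminus\{1\}\vert\geq\vert B\setminus\{1\}\vert$, i.e. the segment $[0,c]$ of $\mathcal{A}$ has at least as many elements as the segment $[0,b]$ of $\mathcal{B}$. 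By \Cref{lemma:coatom} the operations $*$ and $\Rightarrow$ of both chains are given by (\ref{eq:dp}) and (\ref{eq:implication}), which depend only on the underlying order, on the distinction between $1$ and the non-top elements, and on the coatom; hence $\phi$ is an embedding of MTL-algebras. Consequently $\varepsilon$ fails in $\mathcal{A}\in\mathcal{K}$.

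In both cases $\varepsilon$ fails in a member of $\mathcal{K}$, so $\mathbb{DP}\subseteq\mathbb{V}$ and therefore $\mathbb{V}=\mathbb{DP}$. The only step requiring genuine care is the verification that an order embedding which maps coatom to coatom really preserves $*$ and $\Rightarrow$, but this has already been discharged inside the proof of \Cref{thm:infinitechain}; everything else is bookkeeping, so I do not expect a real obstacle here.
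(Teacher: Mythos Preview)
Your proof is correct and follows essentially the same route as the paper, which simply states that the result follows from \Cref{thm:infinitechain} together with \Cref{teo:cf}. You have spelled out the details that the paper leaves implicit, including the case split on whether $\mathcal{K}$ contains an infinite chain and the explicit embedding of a finite DP-chain into a larger one.
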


\begin{theorem}\label{thm:DPcanonical}
The logic DP enjoys the strong completeness w.r.t. $[0,1]^c$, with $c \in (0,1)$.
Moreover, DP is not standard complete but it is canonically complete.
\end{theorem}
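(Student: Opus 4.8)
The statement bundles three claims: strong completeness of DP with respect to $[0,1]^c$, failure of standard completeness, and canonical completeness; I would treat them in this order, the first being the only one that needs real work. Soundness with respect to $[0,1]^c$ is immediate since $[0,1]^c\in\mathbb{DP}$, so the plan for strong completeness is to realise every failed deduction inside $[0,1]^c$. Assume $\Gamma\not\vdash_{\mathrm{DP}}\varphi$. Since the language has countably many variables, the Lindenbaum--Tarski algebra of the theory axiomatised by $\Gamma$ is a countable DP-algebra $\mathcal{L}$ in which $[\gamma]=1$ for all $\gamma\in\Gamma$ while $[\varphi]\neq 1$. By Birkhoff's subdirect representation, $\mathcal{L}$ embeds into a product of its subdirectly irreducible quotients, which are countable DP-chains (the subdirectly irreducible $\mathbb{MTL}$-algebras being chains); picking a coordinate where the image of $[\varphi]$ is still $\neq 1$ yields a countable DP-chain $\mathcal{D}$ and an evaluation $e$ into $\mathcal{D}$ with $e[\Gamma]\subseteq\{1\}$ and $e(\varphi)\neq 1$. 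Everything thus reduces to the embedding lemma: \emph{every countable DP-chain embeds into $[0,1]^c$}.

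To prove that lemma, let $d$ be the coatom of $\mathcal{D}$ (\Cref{lemma:coatom}; the case $|D|\le 2$ is trivial, $\{0,1\}$ sitting inside $[0,1]^c$ as its Boolean subalgebra). The elements of $\mathcal{D}$ strictly below $1$ form a countable linear order with least element $0$ and greatest element $d$, so by Cantor's theorem it order-embeds into $\langle[0,c],\le_{\mathbb{R}}\rangle$ with $0\mapsto 0$ and $d\mapsto c$; extending by $1_{\mathcal{D}}\mapsto 1$ gives an order embedding $\phi\colon\mathcal{D}\to[0,1]^c$. By \Cref{lemma:coatom} the operations of both chains are given by (\ref{eq:dp}) and (\ref{eq:implication}), and a direct inspection of these formulas shows that any order embedding carrying $0$, the coatom and $1$ to $0$, the coatom and $1$ respectively automatically preserves $*$ and $\Rightarrow$ (and it trivially preserves $\sqcap,\sqcup,0,1$); hence $\phi$ is an $\mathbb{MTL}$-embedding, and composing $e$ with $\phi$ transfers the counterexample to $[0,1]^c$. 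Equivalently, one can package the same content as the identity $\mathbb{DP}=\mathbb{Q}([0,1]^c)$: local finiteness of $\mathbb{DP}$ reduces quasivariety membership to finite subalgebras, these are subdirect products of finite DP-chains by \Cref{teo:cf}, and each finite DP-chain embeds into $[0,1]^c$ by the proof of \Cref{thm:infinitechain}; strong completeness with respect to $[0,1]^c$ is then the standard reading of this equality for algebraizable logics.

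The failure of standard completeness is \Cref{rem:standardfail}: a variety of $\mathbb{MTL}$-algebras can be standard complete only if it is generated by standard chains, but by \Cref{lemma:coatom} every non-trivial DP-chain has a coatom while $\langle[0,1],\le_{\mathbb{R}}\rangle$ has none, so $\mathbb{DP}$ contains no standard algebra. For canonical completeness I would verify the two clauses of \Cref{def:canonical} with $\mathcal{A}=[0,1]^c$: it generates $\mathbb{DP}$ by \Cref{thm:infinitechain}, and its lattice reduct $[0,c]\cup\{1\}$ is a sublattice of $\langle[0,1],\le_{\mathbb{R}}\rangle$. For the maximality clause, given a DP-chain $\mathcal{B}$ whose lattice reduct is a sublattice of $\langle[0,1],\le_{\mathbb{R}}\rangle$, let $t=\max B$ be its top element and $b$ its coatom (\Cref{lemma:coatom}; $|B|=2$ being trivial). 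Since every element of $B$ other than $t$ is $\le b$, we have $B\subseteq[0,b]\cup\{t\}$, and equipping $[0,b]\cup\{t\}$ with the operations (\ref{eq:dp}) and (\ref{eq:implication}) (reading $b$ for $c$ and $t$ for $1$) yields a DP-chain $\mathcal{A}'$ whose ordered set is a closed real interval with a top point appended, hence order-isomorphic to that of $[0,1]^c$; therefore $\mathcal{A}'\cong[0,1]^c$ by \Cref{lemma:coatom} and \Cref{stdiso}, and $\langle B,\le_B\rangle$ is a subchain, hence a sublattice, of $\langle A',\le_{A'}\rangle$, as required.

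The main obstacle is the embedding lemma of the first two paragraphs, namely that every countable DP-chain embeds into $[0,1]^c$; its two ingredients are \Cref{lemma:coatom}, which recovers the monoidal and residual operations from the order together with the coatom, and the classical fact that every countable linear order with endpoints embeds into $\langle[0,1],\le_{\mathbb{R}}\rangle$. Everything else is routine bookkeeping or a direct appeal to \Cref{thm:infinitechain}, \Cref{teo:cf} and \Cref{rem:standardfail}.
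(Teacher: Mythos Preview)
Your argument is correct and follows the same route as the paper: reduce strong completeness to the embedding of every countable DP-chain into $[0,1]^c$ (the paper cites \cite[Theorem~3.5]{dist} for this reduction, while you unfold it via the Lindenbaum--Tarski algebra and subdirect decomposition into chains), establish the embedding using \Cref{lemma:coatom} together with the density of $[0,c]$, and derive the remaining two claims from \Cref{rem:standardfail} and \Cref{stdiso}, giving a more explicit verification of \Cref{def:canonical} than the paper's terse appeal to \Cref{stdiso}. One small caveat on your aside: for a finitary algebraizable logic the identity $\mathbb{DP}=\mathbb{Q}([0,1]^c)$ is equivalent only to \emph{finite} strong completeness, and does not by itself handle infinite premise sets $\Gamma$; it is your first argument, via the embedding lemma, that actually establishes the full statement.
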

\begin{proof}
By \cite[Theorem 3.5]{dist} it is enough to show that every countable DP-chain embeds into $[0,1]^c$. Let $\mathcal{B}$ be a countable DP-chain.
Reasoning as in the proof of \Cref{thm:infinitechain} we find the desired embedding $\phi \colon \mathcal{B} \hookrightarrow [0,1]^c$ (preserving $0,1$ and the coatom). Such $\phi$ exists because $[0,c]$ is an uncountable dense linear order.
The latter statement follows from Remark \ref{rem:standardfail} and Lemma \ref{stdiso}.
\end{proof}

\section{S$_n$MTL and the definability of the $\Delta$ operator}

DP coincides with S$_3$MTL. In this section we shall recall some interesting properties
of S$_n$MTL-algebras from \cite{nog,hnp,kow}, and relate them to the definability
of the $\Delta$ projection operator \cite{bdelta}.

We recall that a variety is {\em semisimple} if all its subdirectly irreducible algebras are simple,
and it is a {\em discriminator} variety if the ternary discriminator $t$
($t(x,x,z) = z$, while $t(x,y,z) = x$  if $x \neq y$), is definable on every subdirectly irreducible algebra.
\begin{theorem}[\cite{nog,hnp,kow}]\label{teo:sn}
Let $\mathbb{L}$ be a variety of MTL-algebras. Then the following are equivalent:
\begin{itemize}
\item $\mathbb{L}$ is semisimple.
\item $\mathbb{L}$ is a discriminator variety.
\item $\mathbb{L}$ is a subvariety of $\mathbb{S}_n\mathbb{MTL}$ for some $n \geq 2$.
\item Every chain in $\mathbb{L}$ is simple and n-contractive (i.e. it satisfies $x^n=x^{n-1}$), for some $n\geq 2$.
\end{itemize}
\end{theorem}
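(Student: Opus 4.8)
The plan is to run the cycle $(\mathrm{iii})\Rightarrow(\mathrm{ii})\Rightarrow(\mathrm{i})\Rightarrow(\mathrm{iii})$ and to read off $(\mathrm{iii})\Leftrightarrow(\mathrm{iv})$ along the way, numbering the four conditions $(\mathrm{i})$--$(\mathrm{iv})$ in the order listed. I would lean throughout on two facts: every MTL-algebra is a subdirect product of MTL-chains, and for an MTL-chain the congruences correspond to the implicative filters (up-sets closed under $*$). For a chain $\mathcal{A}$ and $1>a\in A$ write $\operatorname{ord}(a)$ for the least $k\ge 1$ with $a^{k}=0$ (or $\infty$); since the filter generated by $a$ is $\{x:x\ge a^{k}\text{ for some }k\}$, a chain is simple exactly when every $1>a$ has finite order, while evaluating the axiom $\varphi\vee\neg(\varphi^{n-1})$ on a chain shows that $\mathcal{A}$ is an $\mathbb{S}_n\mathbb{MTL}$-chain iff $a^{n-1}=0$ for every $1>a\in A$. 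As $x^{n}\le x^{n-1}\le x$ on any chain, $x^{n}=x^{n-1}$ forces $x^{n-1}\in\{0,1\}$, so ``simple and $n$-contractive'' and ``every element $<1$ has order $\le n-1$'' single out the same chains; hence, once a common $n$ has been secured, $(\mathrm{iii})$ and $(\mathrm{iv})$ both assert that every chain of $\mathbb{L}$ is an $\mathbb{S}_n\mathbb{MTL}$-chain, which by the subdirect representation is equivalent to $\mathbb{L}\subseteq\mathbb{S}_n\mathbb{MTL}$.

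For $(\mathrm{iii})\Rightarrow(\mathrm{ii})$: on every $\mathbb{S}_n\mathbb{MTL}$-chain the unary term $\delta(x)\df x^{n-1}$ satisfies $\delta(1)=1$ and $\delta(x)=0$ for $x<1$ --- it realises the $\Delta$ operator --- while the biconditional $x\leftrightarrow y\df(x\Rightarrow y)\sqcap(y\Rightarrow x)$ equals $1$ iff $x=y$. Then
\[
 t(x,y,z)\df\bigl(\delta(x\leftrightarrow y)\sqcap z\bigr)\sqcup\bigl(\sim\!\delta(x\leftrightarrow y)\sqcap x\bigr)
\]
returns $z$ when $x=y$ and $x$ otherwise, i.e.\ it computes the ternary discriminator on every chain of $\mathbb{S}_n\mathbb{MTL}$, hence on every subdirectly irreducible member --- necessarily a chain --- of any subvariety $\mathbb{L}\subseteq\mathbb{S}_n\mathbb{MTL}$, so $\mathbb{L}$ is a discriminator variety. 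The step $(\mathrm{ii})\Rightarrow(\mathrm{i})$ is the textbook fact that a discriminator variety is semisimple: if a congruence $\theta$ of a subdirectly irreducible algebra identifies $a\ne b$, then for every $c$ it identifies $a=t(a,b,c)$ with $c=t(b,b,c)$, so $\theta$ is the total congruence.

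The load-bearing step is $(\mathrm{i})\Rightarrow(\mathrm{iii})$, which I would handle in two moves. First, every chain $\mathcal{A}\in\mathbb{L}$ is simple: otherwise fix a proper filter $F$ of $\mathcal{A}$, choose $a\in A\setminus F$, and by Zorn's lemma pick a filter $G$ maximal among those not containing $a$; then by maximality every non-trivial filter of $\mathcal{A}/G$ contains $\bar a$, so $\mathcal{A}/G$ is subdirectly irreducible, whereas $a^{k}\le a\notin G$ forces $a^{k}\notin G$ for all $k$, so $\bar a<\bar 1$ has infinite order and $\mathcal{A}/G$ is not simple --- contradicting semisimplicity. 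Second, there is a uniform bound: were there, for each $n$, a chain $\mathcal{C}_n\in\mathbb{L}$ with some $a_n<1$ and $a_n^{n-1}\ne 0$, an ultraproduct $\prod_{U}\mathcal{C}_n$ over a non-principal ultrafilter would be a chain in $\mathbb{L}$ in which $[a_n]<1$ has infinite order, hence --- by the first move applied to it --- a non-simple chain of $\mathbb{L}$, which is impossible. Thus $a^{n-1}=0$ for every $1>a$ in every chain of $\mathbb{L}$, i.e.\ $\mathbb{L}\subseteq\mathbb{S}_n\mathbb{MTL}$, and the cycle closes.

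The only genuine obstacle is $(\mathrm{i})\Rightarrow(\mathrm{iii})$; it rests on two ingredients I would isolate first: the filter/congruence dictionary for MTL-chains, which turns any infinite-order element into a non-simple subdirectly irreducible quotient, and closure of $\mathbb{L}$ under ultraproducts, which promotes ``each chain is simple'' into ``all chains are $n$-contractive for one common $n$''. Everything else amounts to evaluating the explicit terms $\delta$ and $t$ displayed above.
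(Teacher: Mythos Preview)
The paper does not give its own proof of this theorem: it is quoted verbatim from \cite{nog,hnp,kow} and used as a black box, so there is no in-paper argument to compare your attempt against. Your implications $(\mathrm{iii})\Rightarrow(\mathrm{ii})\Rightarrow(\mathrm{i})$ are correct, and your discriminator term is exactly the one the paper exhibits in the remark following \Cref{thm:delta}.

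The gap is in your ``first move'' of $(\mathrm{i})\Rightarrow(\mathrm{iii})$. From $a^{k}\le a\notin G$ you correctly deduce $a^{k}\notin G$, but that says only $\bar a^{k}\neq\bar 1$ in $\mathcal{A}/G$; \emph{infinite order} of $\bar a$ requires $\bar a^{k}\neq\bar 0$, i.e.\ $\sim a^{k}\notin G$, which your argument nowhere establishes. In fact your choice ``$a\in A\setminus F$'' permits $a=0$, in which case $G$ is a maximal proper filter and $\mathcal{A}/G$ is simple, so the construction produces no contradiction at all. The obvious repair --- take $a\in F\setminus\{1\}$ so that $a$ itself has infinite order --- does not finish the job either: you still need a subdirectly irreducible quotient in which the image of $a$ \emph{retains} infinite order, and a filter $G$ maximal for ``$a\notin G$'' gives no control over whether $\sim a^{k}\in G$. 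Producing such a quotient is precisely the delicate point in Kowalski's proof, and it is not a one-liner. Since your ``second move'' explicitly appeals to the first, the whole implication $(\mathrm{i})\Rightarrow(\mathrm{iii})$ is currently unproved.

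A smaller slip: the clause ``$x^{n}=x^{n-1}$ forces $x^{n-1}\in\{0,1\}$'' is false as stated (every G\"odel chain is $2$-contractive with many idempotents strictly between $0$ and $1$); you need the simplicity hypothesis in force there, which is presumably what you intended given the surrounding sentence.
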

Given a schematic extension L of MTL we write L$_\Delta$
for the extension/expansion of L
with the $\Delta$ unary projection connective,
axiomatised as follows:
\begin{alignat*}{2}
&\Delta \varphi \vee \neg\Delta \varphi, \qquad\Delta \varphi \to \varphi, && \quad\Delta\varphi\to \Delta \Delta \varphi,\\
&\Delta(\varphi\to \psi)\to (\Delta\varphi \to \Delta\psi),&&\quad\Delta(\varphi \vee \psi)\to (\Delta \varphi \vee \Delta \psi).
\end{alignat*}
Recall that on every MTL-chain $A$ and every $x \in A$,
the identities associated with these axioms model the operation 
$\Delta x = 1$ if $x = 1$, while $\Delta x = 0$ if $x < 1$.
\begin{proposition}\label{prop:delta}
Let $\mathbb{L}$ be a variety of MTL-algebras.
Then $\mathbb{L}_\Delta$ is semisimple.
\end{proposition}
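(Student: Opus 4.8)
The plan is to exploit the Blok--Pigozzi correspondence between the congruences of an $\mathbb{MTL}_\Delta$-algebra $\mathcal{A}=(A,*,\Rightarrow,\sqcap,\sqcup,\Delta,0,1)$ and its \emph{$\Delta$-filters}, that is, the implicative filters of the $\mathbb{MTL}$-reduct that are additionally closed under $\Delta$. Since a variety is semisimple exactly when each of its subdirectly irreducible members is simple, it suffices to show that a subdirectly irreducible $\mathcal{A}\in\mathbb{L}_\Delta$ admits no $\Delta$-filter besides $\{1\}$ and $A$.

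The first step is a reduction to chains. Every $\mathbb{MTL}_\Delta$-algebra is a subdirect product of $\mathbb{MTL}_\Delta$-chains: arguing as in the $\Delta$-free case but using the axiom $\Delta(\varphi\vee\psi)\to(\Delta\varphi\vee\Delta\psi)$, one verifies that a $\Delta$-filter which is maximal with respect to omitting a fixed element $a\neq 1$ is prime, so that the corresponding quotient is a chain separating $a$ from $1$. (Alternatively one can argue directly: if $a,b\in A$ are incomparable then $u\df a\Rightarrow b$ and $v\df b\Rightarrow a$ satisfy $u,v<1$ and $u\sqcup v=1$, whence the quoted axiom together with $\Delta 1=1$ yields $\Delta u\sqcup\Delta v=1$; as each $\Delta x$ is a complemented idempotent with complement $\sim\!\Delta x$, the $\Delta$-filter it generates is the principal up-set $\{z\in A\mid z\geq\Delta x\}$, so the $\Delta$-filters generated by $u$ and by $v$ meet in $\{z\mid z\geq\Delta u\sqcup\Delta v\}=\{1\}$, contradicting subdirect irreducibility.) In either form, a subdirectly irreducible $\mathcal{A}\in\mathbb{L}_\Delta$ must be an $\mathbb{MTL}_\Delta$-chain.

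The second step is then immediate. As recalled just before the statement, on an $\mathbb{MTL}_\Delta$-chain one has $\Delta x=1$ if $x=1$ and $\Delta x=0$ otherwise; hence if $F$ is a $\Delta$-filter of a non-trivial $\mathbb{MTL}_\Delta$-chain with $F\neq\{1\}$, choosing $a\in F$ with $a<1$ and using closure under $\Delta$ gives $0=\Delta a\in F$, so $F=A$. Thus every non-trivial $\mathbb{MTL}_\Delta$-chain is simple, in particular every subdirectly irreducible member of $\mathbb{L}_\Delta$ is simple, and therefore $\mathbb{L}_\Delta$ is semisimple. The delicate point is the reduction to chains: it genuinely needs the $\Delta$-axiom for disjunction, since the prime implicative filters produced by the ordinary $\mathbb{MTL}$ argument are in general not closed under $\Delta$; once we are on a chain, the rest is routine.
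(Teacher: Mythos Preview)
Your proof is correct and follows essentially the same route as the paper's: reduce to chains, then use that on a chain $\Delta$ sends every element below $1$ to $0$, so any nontrivial congruence collapses $0$ and $1$. The paper takes the chain reduction for granted and argues directly with congruences (showing $\langle 0,1\rangle\in\theta$ via $\Delta(b\Rightarrow a)$), whereas you additionally justify the reduction and phrase the simplicity step in terms of $\Delta$-filters; the underlying idea is identical.
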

\begin{proof}
We prove that each chain in $\mathbb{L}_\Delta$ is simple.

Take a chain $\mathcal{A}\in\mathbb{L}_\Delta$. For every non-trivial congruence $\theta$ on $\mathcal{A}$, it holds that $\lag a,b\rog\in\theta$, for some $a\neq b$. Then exactly one of $a\Rightarrow b$ and $b\Rightarrow a$ is $1$, say $a\Rightarrow b$. Hence $\lag b\Rightarrow a, 1\rog\in\theta$, and $\lag\Delta(b\Rightarrow a),\Delta(1)\rog\in\theta$. That is, $\lag 0, 1\rog\in\theta$.
Since $\theta$ is a congruence of the lattice reduct of $\mathcal{A}$, all elements between $0$ and $1$ are in the same class, which means
that $\mathcal{A}$ is simple.
\end{proof}
\begin{theorem}\label{thm:delta}
Let $\mathbb{L}$ be a variety of MTL-algebras.
Then $\mathbb{L}_\Delta = \mathbb{L}$ iff $\mathbb{L}$ is a subvariety of $\mathbb{S}_k\mathbb{MTL}$ for some
integer $k > 1$.
\end{theorem}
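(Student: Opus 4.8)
The plan is first to fix the meaning of the statement ``$\mathbb{L}_\Delta=\mathbb{L}$'': it asserts that the functor deleting $\Delta$ is an isomorphism of varieties $\mathbb{L}_\Delta\to\mathbb{L}$, equivalently that $\Delta$ is term-definable over $\mathbb{L}$ by an $\mathbb{MTL}$-term $\delta(x)$ which, read as an identity in $\mathbb{L}$, realises all the defining equations of $\Delta$. Both implications will be proved on chains only, using that --- exactly as for $\mathbb{MTL}$ --- every subvariety of $\mathbb{MTL}$ and of $\mathbb{MTL}_\Delta$ is generated by its chains, so every identity occurring in the argument can be tested on the chains of the relevant variety.

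For the ``if'' direction I would assume $\mathbb{L}\subseteq\mathbb{S}_k\mathbb{MTL}$ for some $k>1$ and propose $\delta(x)\df x^{k-1}$. On any $\mathbb{L}$-chain the axiom $\varphi\vee\neg(\varphi^{k-1})$ collapses to ``$x=1$ or $x^{k-1}=0$'', so $\delta$ sends $1$ to $1$ and every element below $1$ to $0$; that is, $\delta$ computes the $\Delta$-projection on chains. A one-line inspection then shows that on chains $\delta$ satisfies each $\Delta$-axiom, hence $\delta$ does so identically in $\mathbb{L}$. Conversely, on any $\mathbb{L}_\Delta$-chain the $\Delta$-axioms force $\Delta a\in\{0,1\}$, $\Delta a\le a$ and $\Delta 1=1$, so $\Delta$ is precisely the projection and therefore coincides with $\delta$; since $\mathbb{L}_\Delta$ is chain-generated, the identity $\Delta x=x^{k-1}$ holds throughout $\mathbb{L}_\Delta$, the deletion functor is an isomorphism, and $\mathbb{L}_\Delta=\mathbb{L}$.

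For the ``only if'' direction I would argue through semisimplicity. By \Cref{prop:delta}, $\mathbb{L}_\Delta$ is semisimple. Assuming $\mathbb{L}_\Delta=\mathbb{L}$, the operation $\Delta$ is a term over the remaining operations, so every algebra of $\mathbb{L}$ and its unique $\Delta$-expansion have the same congruence lattice; hence an algebra of $\mathbb{L}$ is simple (resp.\ subdirectly irreducible) iff its $\Delta$-expansion is, and semisimplicity of $\mathbb{L}_\Delta$ passes to $\mathbb{L}$. By the equivalences in \Cref{teo:sn}, a semisimple variety of $\mathbb{MTL}$-algebras is a subvariety of $\mathbb{S}_n\mathbb{MTL}$ for some integer $n>1$, which is the claim.

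The substantive point, and the only place where care is needed, is the first step: making precise that ``$\mathbb{L}_\Delta=\mathbb{L}$'' is equivalent to term-definability of $\Delta$ in $\mathbb{L}$, and checking that the $\Delta$-axioms genuinely pin down $\Delta$ as the projection on every $\mathbb{L}_\Delta$-chain --- so that the candidate term $x^{k-1}$ is forced, not merely admissible. Granting this, the ``if'' half is a finite verification on chains and the ``only if'' half is an immediate application of \Cref{teo:sn}.
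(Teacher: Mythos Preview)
Your proposal is correct and follows essentially the same route as the paper: the ``if'' direction uses $\Delta x = x^{k-1}$ verified on chains, and the ``only if'' direction invokes \Cref{prop:delta} for semisimplicity and then \Cref{teo:sn}. You are simply more explicit than the paper about two points it leaves tacit---the precise meaning of $\mathbb{L}_\Delta=\mathbb{L}$ as term-definability, and the congruence-lattice argument transferring semisimplicity from $\mathbb{L}_\Delta$ to $\mathbb{L}$---both of which are welcome clarifications rather than a different strategy.
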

\begin{proof}
It is immediate to check that the $\Delta$ operator is definable in each variety $\mathbb{S}_k\mathbb{MTL}$
as $\Delta x = x^{k-1}$.

For the other direction, assume $\Delta$ is definable in a variety $\mathbb{L}$ of MTL-algebras.
Then $\mathbb{L}$ is semisimple by \Cref{prop:delta}.
By \Cref{teo:sn}, $\mathbb{L}$ is a subvariety of $\mathbb{S}_k\mathbb{MTL}$ for some
integer $k > 1$.
\end{proof}

\begin{remark}
Theorem \ref{thm:delta} shows that if $\Delta$ is definable in some subvariety $\mathbb{L}$ of MTL-algebras,
then it is always definable as $\Delta x = x^k$ for some $k \geq 1$. Further,
$\mathbb{L}$ is a discriminator variety iff it defines $\Delta$ and, in this case, $(\Delta(x \leftrightarrow y) \wedge z) \vee (\neg\Delta(x \leftrightarrow y) \wedge x)$
is the discriminator term.

Notice that the assumptions of \Cref{teo:sn} cannot be generalised to extensions/expansions of MTL.
For instance, consider the logic G$_\backsim$ introduced in \cite{eghn}, which is G\"{o}del
logic extended/expanded with an independent involutive negation $\backsim$.
It is an exercise to check that $\Delta$ is definable in G$_\backsim$ as $\Delta x = \neg \backsim x$,
but not as $x^k$ for any integer $k$. Hence $\mathbb{G}_\backsim$ is a discriminator variety, but G$_\backsim$ is not
an extension/expansion of any S$_k$MTL.
\end{remark}
\begin{corollary}\label{cor:deltainBL}
A variety $\mathbb{L}$ of BL-algebras coincides with $\mathbb{L}_\Delta$ iff
it is a variety of MV-algebras generated by a finite set of finite MV-chains.
\end{corollary}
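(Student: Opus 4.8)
The plan is to deduce the statement from Theorems~\ref{thm:delta} and~\ref{teo:sn}. For a variety $\mathbb{L}$ of BL-algebras, Theorem~\ref{thm:delta} says that $\mathbb{L}_\Delta=\mathbb{L}$ iff $\mathbb{L}$ is a subvariety of $\mathbb{S}_k\mathbb{MTL}$ for some $k>1$, and by Theorem~\ref{teo:sn} this holds precisely when every chain of $\mathbb{L}$ is simple and $n$-contractive for one fixed $n$. Since MTL-algebras, hence BL-algebras, are representable, $\mathbb{L}$ is generated by the class of its chains; so the whole corollary reduces to identifying which BL-chains are simple and $n$-contractive.

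First I would show that a simple $n$-contractive BL-chain $\mathcal{C}$ is a finite MV-chain. Writing $\mathcal{C}$ as an ordinal sum $\bigoplus_{i\in I}W_i$ of (nontrivial) Wajsberg hoops with $W_{i_0}$ bounded, $i_0=\min I$, one observes that if $|I|>1$ then any element $a\neq 1$ of a component $W_{i_1}$ with $i_1>i_0$ is non-nilpotent (its powers stay inside $W_{i_1}$, hence above $0$), so the filter it generates is proper and nontrivial, contradicting simplicity; thus $\mathcal{C}=W_{i_0}$ is an MV-chain. Next, an $n$-contractive MV-chain is finite: an infinite one is either an infinite — hence dense — subalgebra of the standard MV-algebra, containing $x$ with $1-x<1/n$ and so $x^n\neq x^{n-1}$, or else it is non-archimedean and contains a non-top element $u$ in its radical, for which $u>u^2>\cdots$ is strictly decreasing and so $u^n\neq u^{n-1}$; both cases contradict $n$-contractivity. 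Conversely, every finite MV-chain is simple (being archimedean) and, if it has at most $N$ elements, satisfies $x^N=x^{N-1}$, since each non-top element is $(N-1)$-nilpotent.

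With this in place the two implications follow. If $\mathbb{L}_\Delta=\mathbb{L}$, then every chain of $\mathbb{L}$ is a finite MV-chain, and $n$-contractivity bounds their size, leaving only finitely many of them up to isomorphism; hence $\mathbb{L}$ is generated by finitely many finite MV-chains, and $\mathbb{L}\subseteq\mathbb{MV}$ because $\neg\neg x=x$ holds in every chain of $\mathbb{L}$. Conversely, if $\mathbb{L}$ is generated by finitely many finite MV-chains, each of size at most $N$, then $\mathbb{L}\models x^N=x^{N-1}$ and every chain of $\mathbb{L}$, being an $N$-contractive MV-chain, is finite and therefore simple; so $\mathbb{L}$ is semisimple, and Theorems~\ref{teo:sn} and~\ref{thm:delta} yield $\mathbb{L}_\Delta=\mathbb{L}$.

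The one genuinely structural step, and the part I expect to require care, is ruling out nontrivial Wajsberg-hoop components above the MV-part of a simple $n$-contractive BL-chain; this relies on the ordinal-sum decomposition of BL-chains and on the fact that a nontrivial top segment of such a sum is always a proper nontrivial implicative filter. Everything else is a short computation in the standard MV-algebra together with the representability of MTL-algebras. Alternatively, one can avoid the ordinal-sum argument by first checking that $\mathbb{S}_k\mathbb{MTL}\cap\mathbb{BL}$ is the variety generated by the finite MV-chains of size at most $k$, and then invoking Komori's classification of the subvarieties of a finitely generated variety of MV-algebras.
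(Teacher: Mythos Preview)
Your argument is correct. The paper, however, takes a much shorter route: it simply invokes \cite[Corollary 8.16]{nog}, which states that $\mathbb{S}_n\mathbb{MTL}\cap\mathbb{BL}$ is generated by the set of all MV-chains with at most $n$ elements, and then applies Theorem~\ref{thm:delta} directly. In other words, the paper outsources to Noguera precisely the structural step you work out by hand---namely, that the simple $n$-contractive BL-chains are exactly the finite MV-chains with at most $n$ elements. Amusingly, the ``alternative'' you sketch in your last paragraph \emph{is} the paper's proof. What your main argument buys is self-containment: you derive the key fact from the ordinal-sum decomposition of BL-chains and elementary properties of MV-chains, rather than citing it. One small terminological wrinkle: in the non-archimedean case you speak of a ``non-top element $u$ in its radical'' with $u>u^2>\cdots$; since the radical of an MV-algebra consists of infinitesimals near $0$, you presumably mean $u=\neg\epsilon$ for a nonzero $\epsilon$ in the radical (or you are using the filter-theoretic dual notion). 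The mathematics is fine either way.
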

\begin{proof}
By \cite[Corollary 8.16]{nog},
$\mathbb{S}_n\mathbb{MTL} \cap \mathbb{BL}$
is generated by the set of all MV-chains with at most $n$ elements, for every integer $n>1$.
The result follows from \Cref{thm:delta}.
\end{proof}
The following result is needed in the next section.
\begin{lemma}\label{lemma:simpleRDP}
The classes of simple WNM-chains and of simple RDP-chains both coincide with the class of DP-chains.
\end{lemma}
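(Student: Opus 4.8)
The plan is to establish the chain of inclusions
\[
\{\text{DP-chains}\}\ \subseteq\ \{\text{simple RDP-chains}\}\ \subseteq\ \{\text{simple WNM-chains}\}\ \subseteq\ \{\text{DP-chains}\},
\]
which forces the three classes to coincide. The middle inclusion is immediate, since every RDP-chain is a WNM-chain by \Cref{rrdp} and being simple is an intrinsic property of an algebra. For the first inclusion it suffices to show that every DP-chain $\mathcal{A}$ is simple (it is then a simple RDP-chain by \Cref{rrdp}); I would argue directly, in the spirit of the proof of \Cref{prop:delta}. If $\theta$ is a non-trivial congruence on $\mathcal{A}$, pick $\lag a,b\rog\in\theta$ with $a<b$: then $\lag b\Rightarrow a,\,1\rog\in\theta$ and $c:=b\Rightarrow a<1$ by the properties of the residuum, whence $\lag c*c,\,1\rog\in\theta$, that is $\lag 0,1\rog\in\theta$ because $c*c=0$ in a DP-chain. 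Since $\theta$ is also a congruence of the lattice reduct and its classes are convex, the class of $1$ is all of $A$, so $\theta$ is the total congruence. (Alternatively, simplicity of DP-chains follows at once from \Cref{teo:sn}, as $\mathbb{DP}=\mathbb{S}_3\mathbb{MTL}$.)

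The substance of the lemma is the last inclusion: every simple WNM-chain is a DP-chain. Let $\mathcal{A}$ be a simple WNM-chain; I may assume $\vert A\vert>2$, the two-element chain being a DP-chain by \Cref{lemma:coatom}. The local form of \ref{eq:wnm} in a chain asserts that for all $x,y$ one has $x*y=0$ or $x*y=\min\{x,y\}$, and in particular $x*x\in\{0,x\}$. First I would rule out non-trivial idempotents: if $x*x=x$ for some $0<x<1$, then ${\uparrow}x=\{a\in A\mid a\geq x\}$ is an implicative filter of $\mathcal{A}$ (it contains $1$, is upward closed, and is closed under $*$, since $a,b\geq x$ implies $a*b\geq x*x=x$), and it is proper and different from $\{1\}$, contradicting simplicity. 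Hence $x*x=0$ for every $x<1$, and by monotonicity $x*y\leq\bigl(\max\{x,y\}\bigr)^{2}=0$ whenever $x,y<1$. Next I would exhibit a coatom: fixing any $a$ with $0<a<1$, we have $a*z=0$ for all $z<1$ while $a*1=a\neq0$, so $a\Rightarrow0=\max\{z\in A\mid a*z=0\}=\max\{z\in A\mid z<1\}$; thus $c:=a\Rightarrow0$ is the coatom of $\mathcal{A}$. Now \Cref{lemma:coatom} applies and $\mathcal{A}$ is a DP-chain.

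Finally, a simple RDP-chain is in particular a simple WNM-chain (again by \Cref{rrdp}), hence a DP-chain, which closes the circle of inclusions. I expect the only point requiring a little care to be the extraction of the coatom in the previous paragraph: the existence of the residuum $a\Rightarrow0$ is precisely what guarantees that the set of elements below $1$ has a maximum --- equivalently, that a ``coatomless'' WNM-chain satisfying $x^{2}=0$ below $1$ cannot exist --- while everything else is a routine manipulation of the WNM identity and of implicative filters.
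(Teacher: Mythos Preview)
Your argument is correct, but it follows a different route than the paper. The paper's proof is a one-liner: it observes that every WNM-chain satisfies $x^3=x^2$, and then invokes \Cref{rrdp} and \Cref{teo:sn} to conclude (the point being that a simple $3$-contractive MTL-chain automatically lies in $\mathbb{S}_3\mathbb{MTL}=\mathbb{DP}$, while conversely all chains in $\mathbb{DP}$ are simple because $\mathbb{DP}$ is semisimple). You instead unpack the mechanism by hand: you read the chain-local form of \ref{eq:wnm} as $x*y\in\{0,\min\{x,y\}\}$, eliminate nontrivial idempotents via the filter $\uparrow\! x$, and then recover the coatom as $\sim a$ for any $0<a<1$ before appealing to \Cref{lemma:coatom}. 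What your approach buys is self-containment and transparency---one sees exactly how simplicity forces the drastic product---while the paper's approach is much shorter at the cost of leaning on the cited structural theorem. Your parenthetical remark that simplicity of DP-chains ``follows at once from \Cref{teo:sn}'' is in fact precisely the paper's strategy, so you have essentially recorded both proofs.
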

\begin{proof}
The result follows from Proposition \ref{rrdp} and from \Cref{teo:sn}, since every WNM-chain satisfies $x^3=x^{2}$.
\end{proof}
\section{A dual equivalence}
In \cite{bval} a dual equivalence between the category of finite RDP-algebras and homomorphisms
and the category $\mathsf{HF}$ of {\em finite hall forests} is proven.
We recall here that a finite hall forest is a finite multiset whose elements are pairs $(T,J)$,
where $T$ is a finite {\em tree} (that is, a poset with minimum such that the downset of each element is a chain)
and $J$ is a (possibily empty) finite chain, while a morphism $h \colon \{(T_i,J_i)_{i \in I}\}  \to \{(T_k,J_k)_{k \in K}\}$
is a family of pairs $\{(f_i,g_i)\}_{i \in I}$ such that for each $i \in I$ there is $k \in K$ such that $f_i \colon T_i \to T_k$,
and $g_i \colon J_i \to J_k$ are order-preserving downset preserving
maps, with the additional constraint that $g_i(\max J_i) = \max J_k$. In case $J_k$ is empty it is stipulated that $g_i$ is
the partial, nowhere defined, map.

For each integer $k > 0$ let ${\mathbf k}$ denote the $k$-element chain.
\begin{definition}\label{def:multiofchains}
{\rm
Let $\mathsf{MC}$ be the category whose objects are finite multisets of (nonempty) finite chains,
and whose morphisms $h \colon C \to D$, are defined as follows. Display $C$ as
$\{C_1,\ldots,C_m\}$ and $D$ as $\{D_1,\ldots,D_n\}$. Then $h = \{h_i\}_{i = 1}^m$,
where each $h_i$ is an order preserving surjection $h_i \colon C_i \twoheadrightarrow D_j$ for some $j = 1,2,\ldots,n$.
Let $\mathsf{MC}^\top$ be the nonfull subcategory of $\mathsf{MC}$ whose morphisms $h \colon C \to D$
satisfy the following additional constraint: for each $i = 1,2,\ldots,m$, if the target $D_j$ of $h_i$ is not isomorphic with $\mathbf{1}$,
then $h_i^{-1}(\max D_j) = \{\max C_i\}$.
}
\end{definition}
\begin{theorem}\label{thm:duality}
The category $\mathsf{MC}^\top$ is equivalent to the full subcategory of $\mathsf{HF}$ whose objects have the form
$\{({\mathbf 1},J_i)\}_{i \in I}$.
Whence, $\mathsf{MC}^\top$ is dually equivalent to the category $\mathbb{DP}_{fin}$ of finite DP-algebras and their homomorphisms.
\end{theorem}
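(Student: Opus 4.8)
The plan is to prove the theorem in two stages, matching its two assertions. The first stage establishes the equivalence between $\mathsf{MC}^\top$ and the full subcategory $\mathsf{HF}_{\mathbf 1}$ of $\mathsf{HF}$ on objects of the form $\{(\mathbf 1, J_i)\}_{i \in I}$; the second stage then composes this equivalence with (the restriction of) the known dual equivalence $\mathsf{HF} \simeq \mathbb{RDP}_{fin}^{op}$ from \cite{bval}, after identifying the image of that restriction with $\mathbb{DP}_{fin}$.

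For the first stage, I would exhibit an explicit functor $F \colon \mathsf{MC}^\top \to \mathsf{HF}_{\mathbf 1}$ and its inverse. On objects, $F$ sends a multiset of chains $\{C_1,\dots,C_m\}$ to the hall forest $\{(\mathbf 1, J_i)\}_{i=1}^m$ where $J_i$ is $C_i$ with its maximum removed (so $J_i$ is the possibly-empty chain obtained by deleting $\max C_i$; note $C_i \cong \mathbf 1$ maps to $(\mathbf 1, \varnothing)$). The inverse functor $G$ sends $(\mathbf 1, J)$ to the chain $J$ with a new top element adjoined. The key point is that these are mutually inverse up to natural isomorphism on objects, which is essentially immediate: a nonempty finite chain is determined by and determines (the removal of) its top element, so $C_i \mapsto J_i \mapsto J_i \cup \{\top\}$ recovers $C_i$. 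The content of the first stage is therefore the bijection on hom-sets, and here I would check carefully that the defining constraint of $\mathsf{MC}^\top$ morphisms — namely $h_i^{-1}(\max D_j) = \{\max C_i\}$ whenever $D_j \not\cong \mathbf 1$ — corresponds exactly, under deletion of maxima, to the $\mathsf{HF}$-morphism conditions restricted to objects with tree component $\mathbf 1$: the map $g_i \colon J_i \to J_j$ must be order-preserving and downset-preserving with $g_i(\max J_i) = \max J_j$ (and $g_i$ the nowhere-defined map when $J_j = \varnothing$), while the tree component $f_i \colon \mathbf 1 \to \mathbf 1$ is forced. A surjection $h_i \colon C_i \twoheadrightarrow D_j$ with the top-fiber condition restricts to an order- and downset-preserving map $J_i \to J_j$ sending $\max J_i$ to $\max J_j$ (surjectivity of $h_i$ gives surjectivity of the restriction, hence the downset condition for free once one observes surjections of chains are downset-preserving), and conversely any such $g_i$ extends uniquely to such an $h_i$ by sending $\max C_i \mapsto \max D_j$; the case $D_j \cong \mathbf 1$ degenerates correctly. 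Functoriality (composition and identities) is then routine.

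For the second stage, I would invoke the dual equivalence of \cite{bval} between $\mathbb{RDP}_{fin}$ and $\mathsf{HF}$, and observe that, since $\mathbb{DP} \subset \mathbb{RDP}$ (Proposition~\ref{rrdp}) and $\mathbb{DP}$ is locally finite (Corollary~\ref{teo:cf}), $\mathbb{DP}_{fin}$ is a full subcategory of $\mathbb{RDP}_{fin}$; hence the duality restricts to a dual equivalence between $\mathbb{DP}_{fin}$ and the full subcategory of $\mathsf{HF}$ spanned by the hall forests dual to finite DP-algebras. It remains to identify that subcategory with $\mathsf{HF}_{\mathbf 1}$. Here I would use the structure theory: a finite DP-algebra is a finite product of finite DP-chains, and by Lemma~\ref{lemma:coatom} a finite DP-chain is determined by its cardinality and has the shape $\mathbf 1 \cup \mathbf k$ with a single coatom. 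Under the \cite{bval} duality a finite RDP-chain corresponds to a single pair $(T,J)$, and I would check that the DP-chains are exactly those for which the tree component $T$ is trivial, i.e. $T \cong \mathbf 1$ — intuitively because the "hall" part $T$ encodes the Gödel-like (idempotent, branching) structure that a DP-chain, being simple with $x*x=0$ for all $x<1$, does not have, whereas the chain part $J$ encodes the MV$_3$-like coatom structure. Then a finite DP-algebra, being a finite product, corresponds to a multiset of such pairs, i.e. to an object of $\mathsf{HF}_{\mathbf 1}$, and functoriality of products under the duality matches morphisms. Composing with the equivalence $F$ of the first stage yields the dual equivalence $\mathsf{MC}^\top \simeq \mathbb{DP}_{fin}^{op}$.

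The main obstacle I anticipate is the last identification — pinning down precisely which objects of $\mathsf{HF}$ arise as duals of finite DP-algebras, and showing this class is closed under the morphisms of $\mathsf{HF}$ in the way needed for fullness. This requires unpacking the concrete form of the \cite{bval} duality on objects (how $(T,J)$ reconstructs an RDP-chain) and verifying that $T \cong \mathbf 1$ is equivalent to the reconstructed chain satisfying $x*x = 0$ for all $x < 1$, i.e. to being a DP-chain by Lemma~\ref{lemma:coatom}; one must also confirm that morphisms in $\mathsf{HF}$ between objects with trivial tree components are automatically the ones with trivial $f_i$ (so that no morphisms are lost), which is clear since $\mathbf 1$ is terminal among trees. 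Everything else — the object and hom-set bijections of the first stage, functoriality, and the transfer of products — is bookkeeping.
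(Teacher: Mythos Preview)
Your proposal is correct and follows essentially the same two-stage architecture as the paper's proof: define the max-deletion functor $Tr$ into the subcategory $\mathsf{HF}_{\mathbf 1}$ and check it is an equivalence, then restrict the duality of \cite{bval}. The one place the paper is sharper is precisely your anticipated obstacle: instead of unpacking the \cite{bval} construction to match $T\cong\mathbf 1$ with the equational condition $x*x=0$ via Lemma~\ref{lemma:coatom}, the paper invokes Lemma~\ref{lemma:simpleRDP} (DP-chains are exactly the simple RDP-chains) together with the fact from \cite{bval} that the dual of a simple RDP-algebra is a singleton hall forest $\{(\mathbf 1,J)\}$, which disposes of the identification in one line.
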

\begin{proof}
Note that the only map from $\mathbf{1}$ to itself is the identity $id_{\mathbf{1}}$.
Then direct inspection shows that the functor $Tr \colon \mathsf{MC}^\top \to \mathsf{HF}$, defined on objects as
$Tr(\{C_1,\ldots,C_m\}) = \{(\mathbf{1},C_1 \setminus \{\max C_1\}),\ldots,(\mathbf{1},C_m \setminus \{\max C_m\})\}$, and on morphisms as
$Tr(\{h_i\}_{i = 1}^m) = \{(id_{\mathbf{1}},h_i \upharpoonright C_i \setminus \{\max C_i\})\}_{i=1}^m$
(we agree that $h_i \upharpoonright \emptyset$ is the nowhere defined map), implements the equivalence stated in the first statement.
Observe that the dual of a simple RDP-algebra in the category $\mathsf{HF}$ is a hall forest of the form $\{(\mathbf{1},J)\}$.
The last statement then follows from Lemma \ref{lemma:simpleRDP}.
\end{proof}

Clearly, the multiset $\{\mathbf{1}\}$ is the terminal object of $\mathsf{MC}^\top$.
Applying the first equivalence of Theorem \ref{thm:duality}
one can verify the following constructions, as they are carried over to $\mathsf{MC}^\top$ from $\mathsf{HF}$. Given two objects $C,D \in \mathsf{MC}^\top$,
the coproduct object $C \uplus D$ of $C$ and $D$ is just the disjoint union of the multisets $C$ and $D$;
the product object $C \times D$ is computed using the following $\mathsf{MC}^\top$ isomorphisms.
First, products distribute over coproducts: $C \times (D \uplus E) \cong (C \times D) \uplus (C \times E)$.
Given $C \in \mathsf{MC}^\top$, let $C^\top$ denote the object obtained adding to each chain in $C$
a fresh maximum. Then $\{\mathbf{i}\} \times \{\mathbf{1}\} \cong \{\mathbf{i}\}$ and
$\{\mathbf{i + 1}\} \times \{\mathbf{2}\} \cong \{\mathbf{i + 1}\}$. Moreover,
$$
\{\mathbf{i+2}\} \times \{\mathbf{j+2}\} \cong
((\{\mathbf{i+2}\} \times \{\mathbf{j+1}\})
 \uplus (\{\mathbf{i+1}\} \times \{\mathbf{j+1}\}) \uplus (\{\mathbf{i+1}\} \times \{\mathbf{j+2}\}))^\top.
$$

Denote by $MC \colon {\mathbb {DP}}_{fin} \to {\mathsf {MC}}^\top$ the functor implementing the dual equivalence.
The following lemma is straightforward.
\begin{lemma}\label{lemma:dualchain}
For any integer $i > 0$,
$MC^{-1}\, \{\mathbf{i}\}$
is the DP-chain with $i+1$ elements.
\end{lemma}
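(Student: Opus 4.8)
The plan is to track what the dual equivalence $MC$ does to the distinguished DP-chains $[0,1]^c$ restricted to finitely many values, i.e. to the finite DP-chains, and then identify the $i$-element chain object $\{\mathbf{i}\}$ of $\mathsf{MC}^\top$ with a concrete algebra. By Lemma~\ref{lemma:coatom} every finite DP-chain with $i+1$ elements is isomorphic to the algebra $\mathbf{D}_{i+1}$ on the set $\{0 = a_0 < a_1 < \cdots < a_{i-1} < a_i = 1\}$ with $*$ and $\Rightarrow$ given by \eqref{eq:dp} and \eqref{eq:implication} (taking $c = a_{i-1}$ when $i \geq 2$, and $\mathbf{D}_1 = \{0,1\}$ the two-element Boolean algebra when $i = 1$). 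Since these exhaust the finite DP-chains up to isomorphism, it suffices to compute $MC(\mathbf{D}_{i+1})$ and check it is (isomorphic in $\mathsf{MC}^\top$ to) the single-chain multiset $\{\mathbf{i}\}$.

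First I would unwind the composite equivalence established in Theorem~\ref{thm:duality}. The functor $MC$ factors as $Tr^{-1}$ composed with the restriction to $\mathbb{DP}_{fin}$ of the duality $\mathbb{RDP}_{fin} \to \mathsf{HF}$ of \cite{bval}. By Lemma~\ref{lemma:simpleRDP} a simple RDP-algebra dualises, under the \cite{bval} duality, to a hall forest of the form $\{(\mathbf{1},J)\}$ with $J$ a single finite chain (possibly empty). So I would recall from \cite{bval} precisely which chain $J$ is attached to the DP-chain $\mathbf{D}_{i+1}$: the length of $J$ is controlled by the number of ``nontrivial'' filters/quotients, which for a DP-chain with $i+1$ elements is $i-1$ (the chain $\mathbf{D}_{i+1}$ has, besides the improper and trivial ones, exactly the principal filters generated by $a_1,\dots,a_{i-1}$, or equivalently its proper quotients form a chain). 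Hence the \cite{bval}-dual of $\mathbf{D}_{i+1}$ is $\{(\mathbf{1}, \mathbf{i-1})\}$ where $\mathbf{i-1}$ denotes the $(i-1)$-element chain, interpreting $\mathbf{0}$ as the empty chain for $i=1$. Applying $Tr^{-1}$, which on an object $\{(\mathbf{1},J)\}$ returns the chain $J$ with a fresh maximum adjoined, yields $MC(\mathbf{D}_{i+1}) = \{\mathbf{i}\}$, as desired.

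An alternative, cleaner route — and probably the one I would actually write — avoids re-deriving the filter count from \cite{bval} by instead using the already-established $\mathsf{MC}^\top$-level structure. Namely, the terminal object of $\mathsf{MC}^\top$ is $\{\mathbf{1}\}$ (noted right after Theorem~\ref{thm:duality}), and under the dual equivalence the terminal object of $\mathsf{MC}^\top$ corresponds to the initial object of $\mathbb{DP}_{fin}$, which is the two-element Boolean algebra $\mathbf{D}_2$; this settles $i = 1$. For the inductive step I would exhibit a surjection in $\mathsf{MC}^\top$ from $\{\mathbf{i+1}\}$ onto $\{\mathbf{i}\}$ (collapse the top two elements, which is legitimate in $\mathsf{MC}^\top$ since $i \geq 1$ forces the constraint $h^{-1}(\max \mathbf{i}) = \{\max (\mathbf{i+1})\}$ to hold precisely when we collapse the second-from-top into the top — so one collapses an interior adjacent pair instead, $h^{-1}$ of the top being just the top), and dually check that the corresponding DP-algebra embedding exhibits $MC^{-1}\{\mathbf{i}\}$ as a subalgebra of $MC^{-1}\{\mathbf{i+1}\}$ with one fewer element, using Lemma~\ref{lemma:coatom} to see that every $(i+1)$-element DP-chain sits inside every $(i+2)$-element DP-chain as the subchain fixing $0$, $1$ and the coatom. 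Counting elements along this tower pins down $|MC^{-1}\{\mathbf{i}\}| = i+1$.

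The main obstacle I anticipate is bookkeeping the ``off-by-one'' and the degenerate cases consistently: the object $\{\mathbf{1}\}$ (a one-element chain) must map to the two-element DP-chain, so the map $\mathbf{i} \mapsto (i{+}1)$-element algebra has a built-in shift coming from the ``fresh maximum'' in the definition of $Tr$, and one must be careful that the empty chain $J = \emptyset$ (equivalently $\mathbf{0}$, which is not an object of $\mathsf{MC}^\top$ since chains there are nonempty, but arises as $C_i \setminus \{\max C_i\}$) is handled by the nowhere-defined-map convention exactly as stipulated in Theorem~\ref{thm:duality}. Once the conventions are lined up, the element count is immediate and the lemma follows; I would present it as a one-paragraph verification citing Theorem~\ref{thm:duality} and Lemma~\ref{lemma:coatom}.
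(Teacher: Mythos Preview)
The paper gives no proof at all --- it simply labels the lemma ``straightforward'' --- so there is nothing to compare line by line. Your proposal contains the right ingredients, but both routes have genuine issues.

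In your first route, the justification for $|J|=i-1$ via ``principal filters generated by $a_1,\dots,a_{i-1}$'' is incorrect: every DP-chain is simple (Lemma~\ref{lemma:simpleRDP} and Theorem~\ref{teo:sn}), so for any $a_j<1$ we have $a_j*a_j=0$ and the filter it generates is the whole algebra. There are no nontrivial filters to count. The chain $J$ in the \cite{bval} duality encodes something else (essentially the non-extremal elements of the chain, not its congruences), and you would need to cite or reproduce that part of \cite{bval} to make this route stand on its own.

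In your second route, the inductive step is circular. Knowing there is a surjection $\{\mathbf{i+1}\}\twoheadrightarrow\{\mathbf{i}\}$ in $\mathsf{MC}^\top$ gives you an embedding $MC^{-1}\{\mathbf{i}\}\hookrightarrow MC^{-1}\{\mathbf{i+1}\}$, but nothing so far tells you the target is a chain, nor that the cardinalities differ by exactly one; invoking Lemma~\ref{lemma:coatom} about embeddings of DP-chains presupposes precisely the identification you are trying to establish.

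The clean fix is the observation you never quite make explicit: a multiset in $\mathsf{MC}^\top$ is coproduct-indecomposable iff it is a singleton $\{\mathbf{i}\}$, and under the dual equivalence coproduct-indecomposables correspond to directly indecomposable finite DP-algebras, which in the semisimple variety $\mathbb{DP}$ are exactly the chains. Thus $i\mapsto MC^{-1}\{\mathbf{i}\}$ is a bijection between $\{1,2,\ldots\}$ and isomorphism classes of finite DP-chains. Since $\{\mathbf{j}\}\twoheadrightarrow\{\mathbf{i}\}$ in $\mathsf{MC}^\top$ iff $j\geq i$ (or $i=1$), and the $k$-element DP-chain embeds in the $l$-element one iff $k\leq l$, this bijection is order-preserving onto $\{2,3,\ldots\}$, hence is $i\mapsto i+1$. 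That is presumably what the paper means by ``straightforward''.
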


Let $\mathbf{F}_k$ denote the free DP-algebra over a set of $k$ many free generators.
In the following we write $n C$ for the $n$th copower of $C \in \mathsf{MC}^\top$.
\begin{theorem}\label{thm:free}
$MC\,\mathbf{F}_1$ is the multiset of chains $\{\mathbf{1},\mathbf{3},\mathbf{2},\mathbf{1}\}$.
Hence, $\mathbf{F}_1$ has exactly $2^2 \cdot 3 \cdot 4 = 48$ elements. More generally,
\begin{equation}\label{eq:free}
MC\,\mathbf{F}_k \cong 2^k \{{\mathbf 1}\}\uplus (3^k - 2^k) \{{\mathbf 2}\} \uplus
\biguplus_{h=3}^{k+2}\left( \sum_{i=0}^{h-2} (-1)^i {h-2 \choose i}(h+1-i)^k \right)\{\mathbf{h}\}\,.
\end{equation}
Whence, the cardinality of $\mathbf{F}_k$ is given by
\begin{equation}\label{eq:cardfree}
2^{2^k} \cdot 3^{3^k-2^k} \cdot \prod_{h=3}^{k+2}(h+1)^{\sum_{i=0}^{h-2} (-1)^i {h-2 \choose i}(h+1-i)^k}\,.
\end{equation}
\end{theorem}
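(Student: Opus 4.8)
The plan is to compute $MC\,\mathbf{F}_k$ by exploiting two facts: first, that the free $k$-generated algebra in a locally finite variety generated by a class of chains is the subalgebra of a suitable product of those chains generated by the $k$ projections; and second, that the dual equivalence of \Cref{thm:duality} together with the product/coproduct formulas for $\mathsf{MC}^\top$ displayed above turn this into a concrete multiset computation. Concretely, since $\mathbb{DP}$ is locally finite (\Cref{teo:cf}) and generated by the finite DP-chains, $\mathbf{F}_k$ is the subalgebra of $\prod_{i} \mathcal{C}_i^{(\text{multiplicity})}$ generated by the $k$ coordinate functions, where the product ranges over (copies of) the finitely many DP-chains that can be generated by $k$ elements. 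Dualizing, $MC\,\mathbf{F}_k$ is obtained from a coproduct of dual objects by taking the $\mathsf{MC}^\top$-product, and the product formulas reduce everything to counting.

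First I would identify which finite DP-chains are $k$-generated and with what multiplicity they occur in the canonical product representation of $\mathbf{F}_k$. By \Cref{lemma:coatom} and \Cref{lemma:dualchain}, the DP-chain with $h+1$ elements is $MC^{-1}\{\mathbf{h}\}$; a chain with $h+1$ elements is generated by $k$ elements precisely when $h+1 \le $ the number of distinct values (plus constants $0,1$) available, and one must count the number of surjections from the free object's generating tuple onto each such chain up to the relevant automorphisms — equivalently, count the $k$-tuples of elements of $\mathbf{h}$ (as an $\mathsf{MC}^\top$-object) that generate it. The key combinatorial input is that the number of appearances of $\{\mathbf{h}\}$ in $MC\,\mathbf{F}_k$ equals the number of order-preserving surjections, after an inclusion–exclusion over the $h-2$ "interior" coatom-style identifications, yielding $\sum_{i=0}^{h-2}(-1)^i\binom{h-2}{i}(h+1-i)^k$; the special cases $h=1$ (giving $2^k$ copies of $\{\mathbf 1\}$, corresponding to Boolean quotients) and $h=2$ (giving $3^k - 2^k$ copies of $\{\mathbf 2\}$, the $3$-element chains) are handled separately because for targets isomorphic to $\mathbf{1}$ the extra $\mathsf{MC}^\top$ constraint is vacuous.

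Then, from \eqref{eq:free}, the cardinality formula \eqref{eq:cardfree} is immediate: by \Cref{lemma:dualchain} the DP-algebra dual to $\{\mathbf{h}\}$ has $h+1$ elements, and the dual of a coproduct (disjoint union of multisets) is a product of algebras, so $|\mathbf{F}_k| = \prod_{h}(h+1)^{m_h}$ where $m_h$ is the multiplicity of $\{\mathbf{h}\}$; substituting $m_1 = 2^k$, $m_2 = 3^k - 2^k$, and $m_h = \sum_{i=0}^{h-2}(-1)^i\binom{h-2}{i}(h+1-i)^k$ for $3 \le h \le k+2$ gives \eqref{eq:cardfree}. For $k=1$ one checks directly that the generating element can take values in $\{0,1\}$ (giving $\mathbf 1$, with the two Boolean constants accounting for multiplicity $2$), in a $3$-element chain ($\mathbf 2$), or in the $4$-element DP-chain ($\mathbf 3$), and tallies multiplicities $\{\mathbf 1,\mathbf 1,\mathbf 2,\mathbf 3\}$ — i.e. $\{\mathbf{1},\mathbf{3},\mathbf{2},\mathbf{1}\}$ reordered — so $|\mathbf{F}_1| = 2\cdot 2\cdot 3\cdot 4 = 48$.

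I expect the main obstacle to be the honest justification of the multiplicities $m_h$: one must argue carefully that $MC\,\mathbf{F}_k$ really is the coproduct $\biguplus_h m_h\{\mathbf{h}\}$ with exactly these $m_h$, which amounts to showing that the subalgebra of the full product generated by the projections hits each isomorphism type of $k$-generated DP-chain exactly as many times as there are generating $k$-tuples, modulo the identifications forced by passing to $\mathsf{MC}^\top$-morphisms (this is where the inclusion–exclusion over the $h-2$ interior points enters, since the coatom $c$ plays a distinguished role by \Cref{lemma:coatom} and only the bottom $h-1$ of the $h+1$ elements are "freely permutable" in the relevant sense). Once the bijection between copies of $\{\mathbf{h}\}$ and generating tuples (counted by inclusion–exclusion) is pinned down, the rest is the routine algebra recorded above.
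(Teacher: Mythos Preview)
Your approach is different from the paper's, and both are valid. The paper never counts generating tuples directly. Instead it (i) imports $MC\,\mathbf{F}_1 = \{\mathbf{1},\mathbf{3},\mathbf{2},\mathbf{1}\}$ from \cite[Proposition~6]{bval}, (ii) uses the universal-algebra fact that $\mathbf{F}_k$ is the $k$th \emph{copower} of $\mathbf{F}_1$, so that $MC\,\mathbf{F}_k$ is the $k$th \emph{product} of $\{\mathbf{1},\mathbf{3},\mathbf{2},\mathbf{1}\}$ in $\mathsf{MC}^\top$, and (iii) applies the product isomorphisms displayed before \Cref{lemma:dualchain} (in particular $\{\mathbf{h}\}\times\{\mathbf{3}\}\cong (h-1)\{\mathbf{h+1}\}\uplus(h-2)\{\mathbf{h}\}$) to obtain a linear recurrence $a_h^{(k+1)}=(h-2)a_{h-1}^{(k)}+(h+1)a_h^{(k)}$ for the multiplicities, which it then solves to produce the closed form. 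Your route, by contrast, uses semisimplicity (\Cref{teo:sn}) to write $\mathbf{F}_k$ directly as a product of simple chains with multiplicities equal to the number of generating $k$-tuples (automorphism groups of DP-chains being trivial), and then counts those tuples by inclusion--exclusion over the $h-2$ elements strictly between $0$ and the coatom $c$ --- since $\{0,1,c\}$ always lies in any nontrivial subalgebra by \Cref{lemma:coatom}. This is more elementary and explains at a glance where the binomial sum comes from; the paper's method exercises the $\mathsf{MC}^\top$ product calculus and avoids appealing to the structure of subalgebras of DP-chains.

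Two points of care in your write-up. First, the phrase ``order-preserving surjections'' is not what is being counted: the multiplicity $m_h$ is the number of arbitrary $k$-tuples in the $(h{+}1)$-element chain whose generated subalgebra is the whole chain, and the inclusion--exclusion is over the $h-2$ elements of $A\setminus\{0,c,1\}$ that must lie in the tuple's image. Second, the case $h=2$ genuinely falls outside the general formula (which would give $3^k$, not $3^k-2^k$): here $A\setminus\{0,c,1\}=\emptyset$, but one still needs the tuple to leave $\{0,1\}$ in order to generate $c$; for $h\ge 3$ this is automatic once the $a_i$'s are hit. Make that distinction explicit and your argument goes through cleanly.
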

\begin{proof}
For what regards $\mathbf{F}_1$
the proof follows at once by Theorem \ref{thm:duality}, Lemma \ref{lemma:dualchain} and \cite[Proposition 6]{bval}.
For the general case, recall that $\mathbf{F}_k$ is the $k$th copower of $\mathbf{F}_1$, hence
$MC\,\mathbf{F}_k$ is given by the $k$th power of $\{\mathbf{1},\mathbf{3},\mathbf{2},\mathbf{1}\}$.
Proceeding by induction on $k$,
we denote by $a_i^{(k)}$ the coefficient multiplier of $\{\mathbf{i}\}$ in Eq. (\ref{eq:free}).
Using the fact that $\{\mathbf{k}\} \times \{\mathbf{3}\} \cong (k-1)\{\mathbf{k+1}\} \uplus (k-2)\{\mathbf{k}\}$,
the computation of $MC\,\mathbf{F}_k \times \{\mathbf{1},\mathbf{3},\mathbf{2},\mathbf{1}\}$ gives the
recurrences $a_1^{(k+1)} = 2 a_1^{(k)}$, $a_2^{(k+1)} = a_1^{(k)} + 3 a_2^{(k)}$, $a_3^{(k+1)} = a_1^{(k)} + a_2^{(k)} + 4 a_3^{(k)}$, and
$a_h^{(k+1)} = (h-2)a_{h-1}^{(k)} + (h+1)a_h^{(k)}$ for $h > 3$, whose solutions finally yield Eq. (\ref{eq:free}).
By Lemma \ref{lemma:dualchain},  Eq. (\ref{eq:cardfree}) yields the cardinality of $\mathbf{F}_k$.
\end{proof}
Notice that by replacing in  Eq. (\ref{eq:cardfree}) each base number with the DP-chain with the same cardinality
one gets the decomposition of $\mathbf{F}_k$ as direct product of chains.

By Theorem \ref{thm:unboundedset}, 
any proper subvariety of $\mathbb{DP}$ is generated by a necessarily finite family of finite chains.
We can then classify all subvarieties of $\mathbb{DP}$.

\begin{theorem}\label{thm:subvarieties}
Each proper subvariety of $\mathbb{DP}$ is generated by a finite chain.
Moreover, two finite chains of different cardinality generate distinct subvarieties.
\end{theorem}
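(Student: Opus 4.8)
The plan is to reduce the statement to a combination of the local finiteness of $\mathbb{DP}$, the embedding facts of \Cref{thm:infinitechain}, and a simple cardinality separation argument for the "moreover" part. First I would recall from \Cref{thm:unboundedset} and \Cref{teo:cf} that every proper subvariety $\mathbb{V} \subsetneq \mathbb{DP}$ is generated by its finite chains, and that this family of generating chains must be of \emph{bounded} cardinality: otherwise, by \Cref{thm:unboundedset}, the chains in $\mathbb{V}$ would already generate all of $\mathbb{DP}$, contradicting properness. So there is a largest integer $n$ such that the DP-chain $\mathbf{D}_n$ with $n$ elements lies in $\mathbb{V}$, and $\mathbb{V} = \mathbb{V}(\{\mathbf{D}_2,\ldots,\mathbf{D}_n\})$.

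Next I would show that in fact $\mathbb{V} = \mathbb{V}(\mathbf{D}_n)$, i.e. the single largest chain suffices. The key observation is the embedding phenomenon already exploited in the proof of \Cref{thm:infinitechain}: if $\mathcal{B}$ is a finite DP-chain and $\mathcal{A}$ is a DP-chain with at least as many elements, then $\mathcal{B} \hookrightarrow \mathcal{A}$ (send $0 \mapsto 0$, $1 \mapsto 1$, the coatom of $\mathcal{B}$ to the coatom of $\mathcal{A}$, and the remaining elements of $\mathcal{B}$ order-injectively into those of $\mathcal{A}$; by Lemma \ref{lemma:coatom} this is automatically an MTL-embedding). Hence each $\mathbf{D}_k$ with $2 \le k \le n$ embeds into $\mathbf{D}_n$, so $\mathbf{D}_k \in \mathbb{V}(\mathbf{D}_n)$, giving $\mathbb{V} \subseteq \mathbb{V}(\mathbf{D}_n)$; the reverse inclusion is immediate since $\mathbf{D}_n \in \mathbb{V}$. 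This proves the first assertion.

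For the "moreover" part, suppose $m < n$ and consider $\mathbf{D}_m$, $\mathbf{D}_n$. I would exhibit an equation (or a failure thereof) separating them. The natural candidate: the DP-chain with $n$ elements has a chain of $n-2$ strictly-positive non-coatom elements below its coatom, so there is a term, definable from the $n-1$-fold "powers being $0$" structure — concretely one can use that $\mathbf{D}_n$ violates the prelinearity-free identity expressing "every $0 < x < 1$ satisfies a bounded-length strictly descending chain of distinct values of the form $x \sqcap c, x \sqcap c^{(2)}, \ldots$" — wait, since all these chains are $n$-contractive with $\Delta x = x^{2}$, the cleanest route is: $\mathbf{D}_n$ has exactly $n$ elements and is generated by a single element (its coatom generates the whole chain by Lemma \ref{lemma:coatom}), whereas any subalgebra or homomorphic image or product of copies of $\mathbf{D}_m$ has all of its chain quotients of size $\le m < n$; so $\mathbf{D}_n \notin \mathbb{V}(\mathbf{D}_m)$ because subdirectly irreducible algebras in $\mathbb{V}(\mathbf{D}_m) = \mathrm{HSP}(\mathbf{D}_m)$ are, by Jónsson's lemma (available as $\mathbb{DP}$ is congruence-distributive, being a variety of residuated lattices) in $\mathrm{HS}(\mathbf{D}_m)$, hence chains with at most $m$ elements. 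Therefore $\mathbb{V}(\mathbf{D}_m) \subsetneq \mathbb{V}(\mathbf{D}_n)$, and in particular the two varieties are distinct.

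I expect the main obstacle to be making the separation argument fully rigorous without invoking machinery heavier than necessary: one must be sure that every subdirectly irreducible member of $\mathrm{HSP}(\mathbf{D}_m)$ really is a chain of size $\le m$. This follows from Jónsson's lemma together with the fact that subalgebras of a DP-chain of size $m$ are DP-chains of size $\le m$ and homomorphic images of such are again DP-chains of size $\le m$ (both clear from Lemma \ref{lemma:coatom}, since a quotient of a DP-chain is a DP-chain and an MTL-subalgebra of a DP-chain is one). Once this is in hand, $\mathbf{D}_n$ — being subdirectly irreducible of size $n > m$ — cannot lie in $\mathrm{HSP}(\mathbf{D}_m)$, closing the argument.
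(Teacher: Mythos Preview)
Your argument for the first assertion is essentially identical to the paper's: bounded cardinality of generating chains (from \Cref{thm:unboundedset}), plus the fact that every finite DP-chain embeds into any DP-chain of greater cardinality, so the largest chain in a generating family already generates the subvariety.

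For the ``moreover'' part your route is correct but genuinely different from the paper's. You invoke J\'onsson's lemma: $\mathbb{DP}$, having a lattice reduct, is congruence-distributive, so the subdirectly irreducibles in $\mathrm{HSP}(\mathbf{D}_m)$ lie in $\mathrm{HSP}_U(\mathbf{D}_m) = \mathrm{HS}(\mathbf{D}_m)$ (as $\mathbf{D}_m$ is finite), and these are DP-chains of size $\le m$; since $\mathbf{D}_n$ is simple, hence subdirectly irreducible, it cannot belong there when $n>m$. The paper instead exploits the duality of \Cref{thm:duality}: it introduces a \emph{height} invariant $H$ on objects of $\mathsf{MC}^\top$, checks that $H$ is monotone under the dual operations corresponding to $H$, $S$, $P$, and concludes via Lemma~\ref{lemma:dualchain} that every finite algebra in $\mathbb{V}(\mathbf{D}_k)$ has dual of height $\le k-1$; then local finiteness (\Cref{teo:cf}) and \cite[Cor.~VI.2.2]{johnstone} lift this to the whole variety. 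Your approach is more self-contained universal algebra and avoids the duality machinery; the paper's approach, by contrast, showcases the duality that is the main technical contribution of the section.

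Two minor points. Your parenthetical ``its coatom generates the whole chain by Lemma~\ref{lemma:coatom}'' is false for $n>3$ (the coatom only generates $\{0,c,1\}$), but you abandon that line anyway, so it does no harm. And when you say quotients of DP-chains are DP-chains of size $\le m$, you could more sharply note that DP-chains are \emph{simple} by \Cref{teo:sn}, so $\mathrm{H}$ contributes nothing new beyond $\mathrm{S}$.
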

\begin{proof}
First note that each non-trivial finite DP-chain embeds into any DP-chain of greater cardinality.
Whence, each proper subvariety $\mathbb{V}$ of $\mathbb{DP}$ is generated by a finite chain,
which is the chain with maximum cardinality among any given set of chains generating $\mathbb{V}$.
For each object $C \in \mathsf{MC}^\top$ let its {\em height} $H(C)$ be the maximum cardinality
of its chains. It is easy to see that $H(C \uplus D) = \max\{H(C),H(D)\}$ and that
given maps $C_1 \hookrightarrow D_1$ and $C_2 \twoheadrightarrow D_2$, it holds that
$H(C_1) \leq H(D_1)$ and $H(D_2) \leq H(C_2)$. It follows by Thm. \ref{thm:duality}, Lemma \ref{lemma:dualchain} and the HSP theorem
that all finite algebras
in the variety generated by the $k$-element DP-chain must have dual of height $\leq k-1$.
Let $\mathbb{V}_h$ be the subvariety generated by the $h$-element chain.
Then, if $h < k$, $(\mathbb{V}_h)_{fin}$ is properly included in $(\mathbb{V}_k)_{fin}$.
By \cite[Cor. VI.2.2]{johnstone} and Cor. \ref{teo:cf}, $\mathbb{V}_h$ is properly included in $\mathbb{V}_k$.
\end{proof}

One may wonder whether there are classes of residuated lattices dually equivalent to $\mathsf {MC}$.
The answer is in the positive, for it is easy to prove that the class of finite algebras in $\mathbb{G}_\Delta$, the variety of
G\"{o}del algebras plus $\Delta$, is the category sought for. It turns out that $\mathbb{DP}$ is equivalent to a non-full subcategory
of $\mathbb{G}_\Delta$, with the same objects, but with fewer morphisms.

Even if we do not have analyzed the first-order case, there is a result that may have some interest.
The logic DP$\forall$ enjoys strong completeness w.r.t. $[0,1]^c$, with $c\in (0,1)$. This can be proved via a modification of a construction described in \cite{dist}, that allows to embed every countable DP-chain into a chain isomorphic to $[0,1]^c$ (for some $c\in (0,1)$), by preserving all $\inf$'s and $\sup$'s. By \cite[Theorems 5.9, 5.10]{dist} this suffices to prove DP$\forall$ is strongly complete w.r.t. $[0,1]^c$.
\bibliography{perfectwnm}
\bibliographystyle{amsalpha}
\end{document}